\documentclass[11pt]{amsart}
\usepackage{amsmath,amssymb,amsfonts,amsthm}
\usepackage[a4paper,margin=29mm]{geometry}

\usepackage{xcolor}
\usepackage{tikz}
\usepackage{float}
\usepackage[unicode,hidelinks]{hyperref}

\newtheorem{theorem}{Theorem}[section]
\newtheorem{lemma}[theorem]{Lemma}
\newtheorem{proposition}[theorem]{Proposition}
\theoremstyle{definition}

\theoremstyle{remark}

\numberwithin{equation}{section}
\DeclareMathOperator{\Ric}{Ric}
\DeclareMathOperator{\Scal}{Scal}
\DeclareMathOperator{\Vol}{Vol}
\DeclareMathOperator{\tr}{tr}
\DeclareMathOperator{\divg}{div}
\DeclareMathOperator{\Id}{Id}
\newcommand{\R}{\mathbb R}
\newcommand{\Sph}{\mathbb S}
\newcommand{\dd}{\,\mathrm d}

\hypersetup{pdftitle={Scalar curvature integrals under nonnegative Ricci curvature},pdfauthor={}}
\title[A four-dimensional counterexample]{Unbounded normalized scalar curvature integrals in dimension four}

\author{Haoxuan Cheng}
\address{School of Mathematical Sciences, Fudan University,
Shanghai 200433, China}
\email{hxcheng25@m.fudan.edu.cn}

\date{}
\subjclass[2020]{Primary 53C21; Secondary 53C20}
\keywords{Nonnegative Ricci curvature, scalar curvature integrals, pole, volume collapse.}
\hypersetup{pdftitle={Unbounded normalized scalar curvature integrals in dimension four}}
\hypersetup{pdfauthor={Haoxuan Cheng}}
\hypersetup{
  pdfsubject={Counterexamples to Yau's scalar curvature integral question},
  pdfkeywords={Nonnegative Ricci curvature, scalar curvature integrals, pole, volume collapse}
}
\begin{document}
\begin{abstract}
We give counterexamples to Yau's question on normalized scalar curvature integrals in every dimension $n\geq4$. For each such $n$, there exists a smooth complete Riemannian metric $g$ on $\R^n$ with nonnegative Ricci curvature and a pole such that
\[
 \lim_{R\to\infty}R^{2-n}\int_{B_q(R)}\Scal_g\,\mathrm dV_g=+\infty
\]
for every fixed $q\in\R^n$. Here $B_q(R)$ denotes the geodesic ball of radius $R$ centered at $q$, and $\Scal_g$ is the scalar curvature of $g$.
\end{abstract}
\maketitle

\section{Introduction}\label{sec:intro}

Let $(M^n,g)$, $n\geq3$, be a connected, smooth, complete noncompact Riemannian manifold without boundary satisfying $\Ric_g\geq0$, and fix $p\in M$. We write $B_p(R)=\{x:d_g(p,x)<R\}$ and use the natural Riemannian measure $\dd V_g$. Our sign convention makes the unit sphere have sectional curvature $+1$, and $\Scal_g=\tr_g\Ric_g$. The Ricci inequality is understood as an inequality of quadratic forms. Define
\begin{equation}\label{eq:normalizedmass}
 \mathcal Q_{g,p}(R)=R^{2-n}\int_{B_p(R)}\Scal_g\dd V_g.
\end{equation}
The normalization is invariant under constant rescaling: for $\lambda>0$,
\[
 \mathcal Q_{\lambda^2g,p}(\lambda R)=\mathcal Q_{g,p}(R).
\]
Distance, scalar curvature and volume scale by $\lambda$, $\lambda^{-2}$ and $\lambda^n$, respectively.
\par
The question is whether every fixed triple $(M,g,p)$ satisfying these assumptions also satisfies
\begin{equation}\label{eq:question}
 \sup_{R\geq1}\mathcal Q_{g,p}(R)<\infty.
\end{equation}
The bound may depend on the fixed triple $(M,g,p)$.

Yau~\cite[p.~278, Problem~9]{Yau1992} asked whether the following quantities remain bounded as $R\to\infty$:
\[
 R^{-n+2k}\int_{B_p(R)}\sigma_k(\Ric_g)\dd V_g.
\]
Here $\sigma_k(\Ric_g)$ is the $k$th elementary symmetric polynomial in the eigenvalues of $g^{-1}\Ric_g$; in particular, $\sigma_1(\Ric_g)=\Scal_g$. Completeness and Hopf--Rinow~\cite[Thm.~6.19 and Corollary 6.21]{Lee2018} imply that closed bounded balls are compact. Smoothness of the curvature then makes the integral finite for each radius. For $k=1$, boundedness as $R\to\infty$ is equivalent to \eqref{eq:question}.

Xu~\cite{Xu2024} formulates the scalar case as the existence of a finite ordinary limit, with no pole assumption. The example below also rules out that stronger conclusion. Yang~\cite{Yang2013} gives higher-order counterexamples among complete K\"ahler metrics on $\mathbb C^m$, of real dimension $2m$, for $2\leq k\leq m$; the present construction concerns the scalar case $k=1$.

A point $p$ is called a \emph{pole} if the exponential map $\exp_p:T_pM\to M$ is a global diffeomorphism; this is the definition used in Xu~\cite{Xu2024}. The counterexample below has this additional property.

Write $\omega_n$ for the volume of the Euclidean unit ball in $\R^n$. When the following limit exists, we call it the asymptotic volume ratio at $p$:
\begin{equation}\label{eq:avr}
 \operatorname{AVR}_p(g)=\lim_{R\to\infty}\frac{\Vol_g B_p(R)}{\omega_nR^n}.
\end{equation}
\relax{}

For complete three-manifolds with nonnegative Ricci curvature and a pole, Xu~\cite{Xu2024} proves that $\lim_{R\to\infty}\mathcal Q_{g,q}(R)=8\pi(1-\operatorname{AVR}_p(g))$ for every fixed $q\in M$. The theorem below shows that the corresponding boundedness conclusion fails in dimension four under the same Ricci and pole assumptions. Proposition~\ref{prop:product} gives this failure in every higher dimension.

\begin{theorem}\label{thm:main}
There exist a smooth complete Riemannian metric $G$ on $\R^4$ and a pole $p$ such that $\Ric_G\geq0$ everywhere and $\Ric_G>0$ outside a compact set. For every fixed $q\in\R^4$, one has $\operatorname{AVR}_q(G)=0$ and
\begin{equation}\label{eq:main}
 \lim_{R\to\infty}R^{-2}\int_{B_q(R)}\Scal_G\dd V_G=+\infty.
\end{equation}
\end{theorem}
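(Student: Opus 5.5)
The plan is to build $G$ as a cohomogeneity-one metric on $\R^4\setminus\{0\}\cong(0,\infty)\times\Sph^3$. In the left-invariant coframe $\sigma_1,\sigma_2,\sigma_3$ of $\Sph^3=SU(2)$, I would take
\[
 G=dr^2+a(r)^2\sigma_1^2+b(r)^2(\sigma_2^2+\sigma_3^2).
\]
The profile functions $a,b$ are chosen so that $a,b$ are odd near $r=0$ with $a'(0)=b'(0)=1$; this makes $G$ smooth across the origin. The radial curves $r\mapsto(r,\theta)$ are then unit-speed geodesics from the origin. If $a,b>0$ for $r>0$ and no focal behaviour occurs, which I will check via the Jacobi equations along radial geodesics, then $p=0$ is a pole. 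Completeness is automatic because $r$ is the distance from $p$ and $r\to\infty$. For such Berger-type warped metrics the Ricci tensor is diagonal in the frame $(\partial_r,\sigma_i)$, with explicit entries polynomial in $a,b,a',b',a'',b''$ divided by powers of $a,b$. I would write these out once, in the standard form:
\begin{itemize}
 \item $\Ric(\partial_r,\partial_r)=-a''/a-2b''/b$;
 \item the $\sigma_1$-entry is $-a''/a-2a'b'/(ab)+2a^2/b^4$;
 \item the $\sigma_2$-entry is $-b''/b-a'b'/(ab)-b'^2/b^2+(4b^2-2a^2)/b^4$;
 \item the scalar curvature $\Scal_G$ is the trace of these with the appropriate multiplicities.
\end{itemize}

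The key steps, in order, are as follows. First, choose $a$ and $b$ concave, increasing, and with $a\le b$. In that regime every diagonal entry is $\ge0$, and they are strictly positive once $a''<0$, $b''<0$ and $a<b$ for large $r$. This gives $\Ric_G\ge0$ everywhere and $\Ric_G>0$ outside a compact set. Second, compute the normalized integral using $\dd V_G=a b^2\dd r\,\dd\mu_{\Sph^3}$. After integrating the second-derivative terms by parts, the leading contribution is a constant multiple of
\[
 \int_0^R \Bigl(a+a b'^2+2a'bb'\Bigr)\dd r,
\]
plus boundary terms. Third, since $r$ is the distance from $p$ and $|r(x)-d(q,x)|\le d(p,q)$, the balls $B_q(R)$ are sandwiched between $B_p(R\mp d(p,q))$. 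Hence it suffices to treat $q=p$, and the same sandwich gives the statement for every $q$. Fourth, show $\operatorname{AVR}_q(G)=0$ from $\Vol B_p(R)\sim\int_0^R ab^2\dd r=o(R^4)$. This holds as soon as $a=o(r)$, which is the collapse of the Hopf fibre.

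The main obstacle is the competition between the Ricci constraint and the requirement that $R^{-2}\int_{B_p(R)}\Scal_G\to\infty$. By Bishop--Gromov, both $a$ and $b$ grow at most linearly. The naive choices $a\sim b\sim r$, or $a$ bounded, give a finite limit, mirroring the three-dimensional result of Xu. The divergence therefore has to come from a slowly varying factor. My first attempt would take
\[
 b(r)\sim r\,\ell(r),\qquad a(r)\sim r/\ell(r),
\]
for large $r$, with $\ell(r)=\log\log r$ or a similar slowly increasing function. This choice keeps $a=o(r)$, so $\operatorname{AVR}=0$. It also keeps the positive curvature term $4/b^2$ of the $\Sph^2$-base paired with the volume density $ab^2$, which is of order $a b'^2\approx r\,\ell$, so that the integral grows like $R^2\ell(R)\to\infty$. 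Meanwhile the negative term $-2a^2/b^4$ and the derivative terms from $\ell'$ are of lower order. The delicate point is that $b=r\ell$ is not concave with $b'\le1$, which the Ricci condition would require. So I expect to have to replace it by a piecewise construction: a sequence of scales $r_k\to\infty$ on which $b'$ stays below $1$ but $a$ is progressively collapsed relative to $b$. The profiles would be smoothed at each transition so that all four Ricci entries stay nonnegative. Each such scale contributes a fixed positive amount $c R^2$-type increment that accumulates. Verifying these Ricci inequalities on the transition regions, with explicit error control, is where I expect most of the work to lie.
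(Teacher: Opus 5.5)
\noindent There is a genuine gap, and it lies in the ansatz itself. No metric $G=dr^2+a^2\sigma_1^2+b^2(\sigma_2^2+\sigma_3^2)$ with $\Ric_G\geq0$ can make $R^{-2}\int_{B_p(R)}\Scal_G\dd V_G$ unbounded, whatever profiles you choose: smooth, piecewise or multi-scale. Write $h_r$ for the metric on $\Sigma_r=\{r\}\times\Sph^3$, with shape operator $P$, mean curvature $H=a'/a+2b'/b$, area $\mathcal A(r)=2\pi^2ab^2$ and total scalar curvature $I(h_r)$. The Gauss and Riccati equations give $\Scal_G=\Scal_{h_r}-2H'-H^2-|P|^2$, hence
\[
 \int_{B_p(R)}\Scal_G\dd V_G=\int_0^R I(h_r)\dd r-2\mathcal A'(R)+\int_0^R\bigl(H^2-|P|^2\bigr)\mathcal A\dd r .
\]
This is the correct form of your integration by parts. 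In your variables the integrand is $2\pi^2(8a+2ab'^2+4a'bb'-2a^3/b^2)$ and the boundary term is $-2\pi^2(2a'b^2+4abb')(R)$.
\begin{itemize}
\item Since $a,b>0$ for all $r>0$, the inequality $H'\leq-H^2/3$ forces $0\leq H\leq3/r$. Together with $\mathcal A\leq2\pi^2r^3$, the last two terms contribute at most $6\pi^2R^2$.
\item For a Berger sphere, $I(h_r)=2\pi^2(8a-2a^3/b^2)$. With $\lambda=a/b$ this equals $2(2\pi^2)^{2/3}\lambda^{2/3}(4-\lambda^2)\Vol(h_r)^{1/3}\leq6(2\pi^2)^{2/3}\Vol(h_r)^{1/3}\leq12\pi^2r$.
\end{itemize}
Hence $R^{-2}\int_{B_p(R)}\Scal_G\dd V_G\leq12\pi^2$ for every admissible choice of profiles. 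In your concave increasing regime this is immediate: the integrand is at most $2\pi^2\cdot14r$ and the boundary term is nonpositive. So the proposed ``accumulating $cR^2$ increments'' cannot occur.

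Your heuristic also fails at earlier points. With $b\sim r\ell$ and $a\sim r/\ell$ you get $ab^2\sim r^3\ell$, which Bishop--Gromov forbids. The base term $4/b^2$ paired with $ab^2$ is $4a\sim r/\ell$, not $r\ell$. In your first step, the $\sigma_1$-entry contains $-2a'b'/(ab)<0$, so concavity plus $a\leq b$ does not give $\Ric\geq0$; the actual condition is $(a'b^2)'\leq2a^3/b^2$, which nearly freezes the fibre length.

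The identity above shows what is really needed: distance spheres whose intrinsic total scalar curvature grows faster than $r$. Equivalently, you need cross-sections which, at unit scale and with a uniform positive Ricci lower bound, have $I\to\infty$. Homogeneous slices cannot supply this; the same bound holds for all left-invariant metrics on $SU(2)$. The paper's key input is a non-homogeneous $T^2$-invariant family $h_A$ on $\Sph^3$ with
\[
 \Ric(h_A)\geq h_A,\qquad \Vol(h_A)\leq\pi^2/(3A^3),
\]
and a core circle of length $2\pi A$, whose flux identity gives $I(h_A)\geq4\pi^2A$. These slices are joined to the round metric in a common volume gauge and slowed down. They are then placed on the narrow, slightly concave cone $dr^2+\rho(r)^2\bar k_{s(r)}$, with a double-logarithmic clock, so that all Ricci blocks stay positive and each slice contributes at least $\frac12\rho(r)S(s(r))$. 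Your basepoint sandwich and your $\operatorname{AVR}=0$ argument are fine and agree with the paper.
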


\subsection*{The geometric mechanism}

We construct toric metrics on $\Sph^3$ with a uniform positive Ricci lower bound, volume tending to zero, and total scalar curvature $I(k)=\int_{\Sph^3}\Scal_k\dd V_k$ tending to infinity. Their orbit space is an interval, with one circle factor collapsing at each endpoint (Figure~\ref{fig:mechanism}). One surviving core circle has circumference of order $A$, while the volume is at most of order $A^{-3}$. The long core contributes a term of order $A$ to the scalar integral through an endpoint flux identity. The diameters nevertheless remain uniformly bounded by the Bonnet--Myers theorem~\cite[Thm.~12.24]{Lee2018}.

We place the slices on a radial end with metric $G=\mathrm dr^2+\rho(r)^2\bar k_{s(r)}$, where $\rho(r)>0$ is comparable to $r$, and $\bar k_s$ is a path of metrics on the fixed three-sphere. Write $S(s)=I(\bar k_s)$.

To preserve Ricci curvature along the end, we use the slow-variation method of Colding--Naber~\cite{ColdingNaber2013}. Their lemma assumes a constant slice volume form, a positive intrinsic Ricci lower bound, and uniform bounds on two parameter derivatives and one spatial covariant derivative of the first parameter derivative. Our slice volumes decrease to zero. We instead arrange a common volume form multiplied by a spatially constant contracting factor, whose logarithmic derivative enters the radial Ricci estimate with a favorable sign. After reparameterizing to bound the deformation derivatives, we let the path evolve at a double-logarithmic rate along a slightly concave end. The concavity and small cone opening provide enough radial and tangential Ricci curvature to absorb the deformation and mixed terms. We arrange that the path is initially round and complete the end with a rotationally symmetric cap.

Intrinsic scalar curvature and slice volume scale by $\rho^{-2}$ and $\rho^3$, leaving a factor of $\rho$ in the scalar integral. The curvature estimates below retain half of this intrinsic contribution. For all sufficiently large $R$, integration over a shell then yields
\begin{equation}\label{eq:geometric-scaling}
 R^{-2}\int_{B_p(R)}\Scal_G\dd V_G
 \geq \frac{c_0}{R^2}\int_{R/2}^R rS(s(r))\dd r
 \geq\frac{3c_0}{8}\inf_{r\in[R/2,R]}S(s(r)),
\end{equation}
where $c_0>0$ is fixed. The right-hand side tends to infinity because $s(r)\to\infty$ and $S(s)\to\infty$; monotonicity of $S$ is unnecessary. Proposition~\ref{prop:scalartail} supplies the slice estimate, and Section~\ref{sec:completion} proves that $r$ is distance from the center.

In a recent preprint, Hao--Zhu~\cite[Thm.~1.1 and Remark 1.2]{HaoZhu2026} construct a complete metric with strictly positive Ricci curvature on $\R^3$ and obtain examples with nonnegative Ricci curvature in higher dimensions by Euclidean products.

The asymptotic conclusions of Hao--Zhu~\cite[Thm.~1.1]{HaoZhu2026} and Theorem~\ref{thm:main} are, for every fixed basepoint,
\[
\begin{aligned}
\text{Hao--Zhu }(n=3):\qquad &
\limsup_{R\to\infty}\frac{1}{R}\int_{B_p(R)}\Scal_g\dd V_g=+\infty,\\[4pt]
\text{This paper }(n=4):\qquad &
\lim_{R\to\infty}\frac{1}{R^2}\int_{B_q(R)}\Scal_G\dd V_G=+\infty.
\end{aligned}
\]

Their three-dimensional example also has zero asymptotic volume ratio~\cite[Proposition 1.3]{HaoZhu2026}. Unlike our examples, it and its Euclidean products have no pole: Xu's three-dimensional theorem rules out a pole in the first factor, and the exponential map of a product is the product of the exponential maps. In dimension four, our metric has a positive definite Ricci tensor outside a compact set, whereas their direct Euclidean product has a zero Ricci direction everywhere.

Hao--Zhu describe their approach as an adaptation of the iterative gluing construction used by Wu--Yan for Milnor's conjecture~\cite[Introduction]{HaoZhu2026}. They smooth a toric cap and core, cut out a three-ball with convex boundary, and normalize its boundary meridian while preserving a region with large scalar curvature integral~\cite[Section 2]{HaoZhu2026}. These regions are inserted into successive annular blocks, with each new region chosen after the preceding geometric cost is known~\cite[Section 4]{HaoZhu2026}. In our closed three-sphere construction, the coefficient correction in Section~\ref{sec:slices} makes the second circle collapse smoothly while retaining a uniform positive Ricci lower bound. Both integral estimates use the tangential Ricci flux identity, applied to different domains and endpoint data~\cite[Lemmas 2.12 and 2.14]{HaoZhu2026}.

\subsection*{Related results}
The classical motivation is the Cohn--Vossen inequality, which bounds total Gaussian curvature on complete noncompact surfaces of nonnegative Gaussian curvature~\cite[Satz 6 and Satz 8, pp.~79--80]{CohnVossen1935}. In higher dimensions, affirmative results for $\mathcal Q_{g,p}(R)$ are known under additional geometric assumptions. Under nonnegative sectional curvature, Petrunin's unit-ball estimate~\cite[Thm.~1.1]{Petrunin2009}, applied to $R^{-2}g$, gives a uniform upper bound. In the locally conformally flat setting, Ma~\cite[Thm.~1.3]{Ma2026} proves that the normalized scalar integral has a finite ordinary limit under nonnegative Ricci curvature. Here local conformal flatness means that the metric is locally a positive smooth scalar multiple of a Euclidean metric.

In dimension three, the geodesic-ball integral has also been controlled under a pole assumption or two-sided scalar curvature bounds. For complete three-manifolds with $\Ric\geq0$ and a pole, Zhu~\cite{Zhu2022} proves the asymptotic upper bound $20\pi$; Xu's exact limit formula in the same setting was stated above. For complete three-manifolds with nonnegative Ricci curvature and scalar curvature bounded above and below by positive constants, Munteanu--Wang~\cite[Thm.~1.2]{MunteanuWang2025} obtain the sharp upper bound $8\pi$ for the limsup.

Green functions give estimates for related three-dimensional quantities with different domains or weights. Xu~\cite{Xu2020} obtains an estimate on sublevel sets of the reciprocal of a minimal positive Green function, with a gradient weight, under nonparabolicity and nonnegative Ricci curvature. For complete noncompact nonparabolic three-manifolds with nonnegative scalar curvature, one end and vanishing first Betti number, Munteanu--Wang~\cite{MunteanuWang2023} prove a level-set monotonicity formula when the minimal positive Green function tends to zero at infinity.

On complete noncompact K\"ahler manifolds, another line of work estimates the scalar integral divided by the ball volume, under holomorphic bisectional curvature assumptions. Chen--Zhu~\cite{ChenZhu2005} obtain linear average decay under positive holomorphic bisectional curvature, and Ni--Tam~\cite[Thm.~0.4(ii)]{NiTam2003} obtain it when that curvature is nonnegative and positive at one point. Quadratic average decay is known under pinching or volume-growth assumptions: Shi--Yau~\cite[Thm.~1]{ShiYau1996} prove it in complex dimension at least three when holomorphic bisectional curvature is bounded, nonnegative and uniformly pinched below by a positive multiple of the scalar curvature; Ni~\cite[Corollary 1]{Ni2005} proves it under bounded nonnegative holomorphic bisectional curvature and maximal volume growth, meaning positive asymptotic volume ratio. Both the volume normalization and these additional curvature hypotheses distinguish these results from the question studied here.

Local curvature estimates also depend on the geometric assumptions. Under the two-sided bound $|\Ric|\leq n-1$ and the lower volume bound $\Vol B_p(1)>v_0>0$, Jiang--Naber~\cite{JiangNaber2021} bound the average of $|\mathrm{Rm}|^2$ over $B_p(1)$ by a constant depending only on $n$ and $v_0$, where $\mathrm{Rm}$ is the full Riemann curvature tensor. The large-scale rescalings of our four-dimensional metric have unit-ball volumes tending to zero, so this noncollapse assumption does not hold uniformly for them.

Section~\ref{sec:slices} constructs the compact slices. Section~\ref{sec:path} joins them to the round metric, establishes a common volume gauge and controls parameter derivatives. Section~\ref{sec:ambient} proves the radial curvature estimates. Section~\ref{sec:completion} completes the metric, proves Theorem~\ref{thm:main} and gives the product examples.

\section{Toric three-spheres with large total scalar curvature}\label{sec:slices}

We use metrics invariant under the standard $\Sph^1\times\Sph^1$ action on $\Sph^3$, with both angular variables of period $2\pi$. Fix the differentiable model
\[
 \Sph^3=\{(\sin x\,e^{i\theta},\cos x\,e^{i\phi}):
 0\leq x\leq\pi/2\},\qquad
 \gamma=\mathrm dx^2+\sin^2x\,\mathrm d\theta^2+\cos^2x\,\mathrm d\phi^2.
\]
At $x=0$ the $\theta$ circle collapses and the $\phi$ circle remains; at $x=\pi/2$ the roles reverse. These are the two core circles of the action. The tensor $\gamma$ is the unit round metric and $\Vol(\gamma)=2\pi^2$. For any metric $h$ on $\Sph^3$, write $I(h)=\int_{\Sph^3}\Scal_h\dd V_h$ for its total scalar curvature. We will construct a family $h_A$ with $\Ric(h_A)\geq h_A$, $\Vol(h_A)\to0$, and $I(h_A)\to\infty$ as $A\to\infty$.

In arclength coordinates on the orbit interval, the metric takes the form $\mathrm dt^2+a(t)^2\mathrm d\theta^2+b(t)^2\mathrm d\phi^2$. The function $a$ starts at zero with unit slope and reaches $A$ at the other endpoint; $b$ starts at $B_A$ and ends at zero with slope $-1$. The coefficients below force $B_A$ to decay fast enough that the volume tends to zero. At the second endpoint, the values $a=A$ and $\mathrm db/\mathrm dt=-1$ give the growing term in the scalar integral. The proofs below establish the smoothness conditions and the flux identity that yields this term.

\begin{figure}[htbp]
\centering
\begin{tikzpicture}[x=1cm,y=1cm,>=stealth,
  every node/.style={font=\small},
  endpoint/.style={align=center,text width=3.8cm,font=\small}]
  \node at (6.2,4.05) {one three-sphere $(\Sph^3,h_A)$, with $A\geq1$ fixed};
  \node at (6.2,3.55) {one interior orbit: $T^2=\Sph^1_\theta\times\Sph^1_\phi$};
  \begin{scope}[shift={(6.2,2.2)}]
    
    \foreach \v in {0,60,120,180,240,300}{
      \draw[gray!50,line width=.28pt,domain=0:360,samples=65,variable=\u]
        plot ({(1.4+.5*cos(\v))*cos(\u)},
              {.34*(1.4+.5*cos(\v))*sin(\u)+.92*.5*sin(\v)});
    }
    \foreach \u in {0,45,90,135,180,225,270,315}{
      \draw[gray!50,line width=.28pt,domain=0:360,samples=65,variable=\v]
        plot ({(1.4+.5*cos(\v))*cos(\u)},
              {.34*(1.4+.5*cos(\v))*sin(\u)+.92*.5*sin(\v)});
    }
    
    \draw[blue!70!black,line width=1.05pt,domain=0:360,samples=81,variable=\u]
      plot ({1.4*cos(\u)},{.476*sin(\u)+.46});
    \draw[->,blue!70!black,line width=1.05pt,domain=20:95,samples=30,variable=\u]
      plot ({1.4*cos(\u)},{.476*sin(\u)+.46});
    \draw[orange!85!black,line width=1.05pt,domain=0:360,samples=81,variable=\v]
      plot ({1.4+.5*cos(\v)},{.46*sin(\v)});
    \draw[->,orange!85!black,line width=1.05pt,domain=-40:45,samples=30,variable=\v]
      plot ({1.4+.5*cos(\v)},{.46*sin(\v)});
    \node[blue!70!black] at (-1.55,.9) {$\theta$};
    \node[orange!85!black] at (2.15,.1) {$\phi$};
  \end{scope}
  \draw[->,densely dashed,gray!70] (6.2,1.15) -- (6.2,.15);
  \node[font=\footnotesize,anchor=west] at (6.4,.7) {orbit projection};
  \draw[orange!85!black,line width=1.05pt] (1.45,2.85) ellipse (.58 and .23);
  \draw[blue!70!black,line width=1.05pt] (10.95,2.85) ellipse (.58 and .23);
  \draw[thick] (.45,0) -- (11.95,0);
  \fill (.45,0) circle (2pt);
  \fill (6.2,0) circle (2pt);
  \fill (11.95,0) circle (2pt);
  \node[above=5pt] at (3.4,0) {orbit interval};
  \node[below=5pt] at (.45,0) {$x=0$};
  \node[below=5pt] at (6.2,0) {$0<x<\pi/2$};
  \node[below=5pt] at (11.95,0) {$x=\pi/2$};
  \node[endpoint] at (1.45,1.75)
    {$\theta$-circle collapses\\[3pt]
     surviving $\phi$-core:\\length $2\pi B_A$};
  \node[endpoint] at (10.95,1.75)
    {$\phi$-circle collapses\\[3pt]
     surviving $\theta$-core:\\length $2\pi A$};
  \node at (6.2,-.95)
    {$\Ric(h_A)\geq h_A$,\qquad
     $\Vol(h_A)\leq\pi^2/(3A^3)$,\qquad $I(h_A)\geq4\pi^2A$};
\end{tikzpicture}
\caption{The torus orbits of $(\Sph^3,h_A)$ for fixed $A\geq1$, shown schematically. At each endpoint, one circle collapses and the other survives. The coordinate $x$ runs along the orbit interval, with arclength element $q_A(x)\dd x$; it is distinct from the ambient radial coordinate $r$. Lemma~\ref{lem:riccislices} proves the displayed estimates.}
\label{fig:mechanism}
\end{figure}
\par
\subsection{Coefficients and smooth endpoints}
The parameter $A$ will tend to infinity; an auxiliary parameter $c$ connects the family to a round metric. Set $\varepsilon=1/4$ and let
\[
 \mathcal P=\bigl([0,1]\times\{\varepsilon\}\bigr)
       \cup\bigl(\{1\}\times[\varepsilon,\infty)\bigr).
\]
For $(c,A)\in\mathcal P$ and $0\leq a\leq A$, define
\begin{align}
 W(a)&=a^2+a^4, & w_c(a)&=cW(a),\\
 F_c(a)&=1-\int_0^a y e^{-2w_c(y)}\dd y,
 & \mathcal H_c(a)&=a^2e^{2w_c(a)},\\
 \eta_{c,A}&=\frac{F_c(A)}{\mathcal H_c(A)},
 & f_{c,A}(a)&=F_c(a)-\eta_{c,A}\mathcal H_c(a),\\
 B_{c,A}&=\frac{2}{-f_{c,A}'(A)}.\label{eq:coefficients}
\end{align}
The auxiliary coefficient $\mathcal H_c$ is distinct from the mean curvature $H$ introduced later. Unless stated otherwise, primes in this section denote $a$ derivatives. On $0<a<A$, define
\begin{equation}\label{eq:toricmetric}
 h_{c,A}=\frac{e^{-2w_c(a)}}{f_{c,A}(a)}\,\mathrm da^2
       +a^2\,\mathrm d\theta^2
       +B_{c,A}^2e^{-2w_c(a)}f_{c,A}(a)\,\mathrm d\phi^2,
 \qquad a=A\sin x.
\end{equation}

The identity $F_c^\prime=-ae^{-2w_c}$ gives $-e^{2w_c}F_c^\prime/a=1$ in the tangential Ricci calculation. The correction $-\eta_{c,A}\mathcal H_c$ enforces $f_{c,A}(A)=0$ and increases the radial Ricci expression, as \eqref{eq:DH} will show. The normalization $B_{c,A}$ makes the terminal arclength slope of $b$ equal to $-1$.

\begin{lemma}\label{lem:smooth}
For every $(c,A)\in\mathcal P$, \eqref{eq:toricmetric} extends to a smooth metric on the fixed $\Sph^3$. The extensions depend jointly smoothly on the parameters on each indicated parameter segment. Moreover, $h_{0,\varepsilon}=\varepsilon^2\gamma$.
\end{lemma}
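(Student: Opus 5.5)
The plan is to verify the two polar smoothness conditions at the endpoints of the orbit interval, working in the coordinate $x$ with $a=A\sin x$. Before that, I will show that the coefficients are well defined and positive on the open interval.

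\textbf{Step 1: positivity of $f_{c,A}$.} I first show $F_c(A)>0$ on $\mathcal P$. For $c=1$ this follows from $\int_0^A ye^{-2w_1}\dd y<\int_0^\infty ye^{-2y^2}\dd y=1/4$. For $A=\varepsilon$ it follows from $\int_0^\varepsilon ye^{-2w_c}\dd y\le\varepsilon^2/2$. Hence $\eta_{c,A}>0$, and
\[
 f_{c,A}'(a)=-ae^{-2w_c}-\eta_{c,A}\mathcal H_c'(a)<0\qquad\text{for }a>0,
\]
since $\mathcal H_c'>0$ there. Therefore $f_{c,A}$ decreases strictly from $f(0)=1$ to $f(A)=0$, with $f'(A)<0$. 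This makes $B_{c,A}>0$ well defined and all coefficients in \eqref{eq:toricmetric} positive on $0<a<A$. All of these quantities are smooth in $(c,A)$ on each segment of $\mathcal P$, because $F_c$ is given by a parameter-dependent integral of a smooth integrand.

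\textbf{Step 2: the endpoint $x=0$.} Since $W$ is even, so are $w_c$, $\mathcal H_c$ and $F_c$ (the integrand $ye^{-2w_c}$ is odd), and hence so is $f_{c,A}$. Thus $\Phi(a)=e^{-2w_c}/f_{c,A}$ and $e^{-2w_c}f_{c,A}$ are smooth functions of $a^2$, equal to $1$ at $a=0$. Near $a=0$ I use the Cartesian coordinates $(a\cos\theta,a\sin\theta)$ on the plane transverse to the $\phi$-circle, and write
\[
 \Phi\,\mathrm da^2+a^2\mathrm d\theta^2=\mathrm dX^2+\mathrm dY^2+\frac{\Phi-1}{a^2}\,(X\mathrm dX+Y\mathrm dY)^2 .
\]
Here $(\Phi-1)/a^2$ is a smooth function of $a^2=X^2+Y^2$, so the metric is smooth and positive definite. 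Passing to $x$ via $a=A\sin x$ is a diffeomorphism near $x=0$ compatible with the model, so the metric agrees with a smooth tensor on $\Sph^3$ there.

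\textbf{Step 3: the endpoint $x=\pi/2$.} This is the main point. Put $y=\pi/2-x$. Then $A-a=A(1-\cos y)$ is a smooth even function of $y$ with expansion $Ay^2/2+O(y^4)$. By Hadamard's lemma $f_{c,A}(a)=(A-a)k(a)$ with $k$ smooth, $k(A)=-f'(A)>0$, and $k$ depends smoothly on the parameters. Hence $f=y^2m(y^2)$ with $m(0)=Ak(A)/2$. Using $\mathrm da=-A\cos x\,\mathrm dx$ and $\cos x=\sin y$, the $\mathrm dy^2$ coefficient is
\[
 E=A^2\frac{\sin^2y}{y^2}\,\frac{e^{-2w_c}}{m},
\]
a smooth positive function of $y^2$ with $E(0)=A^2e^{-2w_c(A)}/m(0)$. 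The $\mathrm d\phi^2$ coefficient is $B^2e^{-2w_c}y^2m$. The polar smoothness condition requires the limit of this coefficient divided by $y^2$ to equal $E(0)$, which is the condition
\[
 B^2e^{-2w_c(A)}m(0)=\frac{A^2e^{-2w_c(A)}}{m(0)},\qquad\text{that is,}\qquad B=\frac{A}{m(0)}=\frac{2}{k(A)}=\frac{2}{-f'(A)},
\]
which is exactly the normalization \eqref{eq:coefficients}. The metric is then smooth by the same Cartesian argument as in Step 2, now with coordinates $(y\cos\phi,y\sin\phi)$, with a rescaling absorbing the value $E(0)$. The $\theta$ coefficient $a^2$ is a smooth positive function of $y^2$. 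Joint smoothness in the parameters follows because every function above is produced by integral formulas (Hadamard quotients) with smooth parameter dependence.

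\textbf{Step 4: the round case.} For $c=0$ we have $w_0=0$, $F_0=1-a^2/2$ and $\mathcal H_0=a^2$, so $\eta=(1-\varepsilon^2/2)/\varepsilon^2$ and $f=1-a^2/\varepsilon^2$. This gives $f'(\varepsilon)=-2/\varepsilon$ and $B=\varepsilon$. Substituting $a=\varepsilon\sin x$ gives $\mathrm da^2/(1-\sin^2x)=\varepsilon^2\mathrm dx^2$ and $\varepsilon^2\cos^2x\,\mathrm d\phi^2$, so $h_{0,\varepsilon}=\varepsilon^2\gamma$.
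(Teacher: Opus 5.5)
Your proof is correct and follows essentially the paper's route: parity and Cartesian coordinates at the first core, a factorization of $f$ at its simple zero at the second core (where the polar smoothness condition reduces exactly to $B_{c,A}=2/(-f_{c,A}'(A))$), and the explicit reduction $f=1-a^2/\varepsilon^2$, $B=\varepsilon$ in the round case. The paper factors $f=(A^2-a^2)\mathcal K(c,A,a^2)$ instead of $(A-a)k(a)$, and it spells out two details you leave implicit: that positivity of this factor persists on an open parameter neighborhood, which is needed for joint smoothness at $c=0$ and at the segment junction, and that $(y\cos\phi,y\sin\phi,\theta)$ differs from the model chart only by the smooth radial factor $\sin y/y$.
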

\begin{proof}
For $c=1$, $W(a)\geq a^2$ gives $\int_0^a y e^{-2W(y)}\dd y\leq\int_0^\infty y e^{-2y^2}\dd y=1/4$, hence $3/4\leq F_1\leq1$. For $A=\varepsilon$ and $0\leq c\leq1$, we have $F_c(a)\geq1-a^2/2\geq31/32$. Thus $\eta_{c,A}>0$, $f(0)=1$, $f(A)=0$, and $f'<0$ on $(0,A]$; here and in the endpoint argument we suppress the parameters. Indeed, $f^\prime(a)=-ae^{-2w(a)}-2\eta a e^{2w(a)}(1+aw^\prime(a))<0$ for $a>0$. Consequently $f>0$ on $[0,A)$ and its final zero is simple. The arclength integrand is therefore $O((A-a)^{-1/2})$ at that endpoint and is integrable. Define
\[
 t(a)=\int_0^a\frac{e^{-w(y)}}{\sqrt{f(y)}}\dd y,
 \qquad L_{c,A}=t(A),\qquad b(t)=Be^{-w(a(t))}\sqrt{f(a(t))}.
\]
Then $h=\mathrm dt^2+a(t)^2\mathrm d\theta^2+b(t)^2\mathrm d\phi^2$. Near $a=0$, $w$ and $f$ are smooth even functions, with $w(0)=0$ and $f(0)=1$. Thus $t(a)$ and its inverse are smooth odd functions with derivative one at zero; $b(t)$ is smooth even and positive there. These parity and slope conditions give smoothness at the first core circle.

At the other endpoint write $f(a)=(A-a)G(a)$, where $G(A)=-f'(A)>0$, and put $z=\sqrt{A-a}$. The distance $\tau$ from that endpoint satisfies
\[
 \tau=\int_0^z\frac{2e^{-w(A-y^2)}}{\sqrt{G(A-y^2)}}\dd y.
\]
This is odd and smooth with positive derivative at zero. Hence $z(\tau)$ is odd and smooth, $a=A-z(\tau)^2$ is even, and
\[
 b=Be^{-w(A-z^2)}z\sqrt{G(A-z^2)}
\]
is odd with $\mathrm db/\mathrm d\tau=BG(A)/2=1$ at zero. This gives smoothness at the second core circle. The opposite circle collapses produce the stated $\Sph^3$.

To check smooth dependence on the parameters in fixed charts, return to the original coordinates. Put $v=a^2$ and define
\begin{align*}
 \widetilde F_c(v)&=1-\frac12\int_0^v e^{-2c(y+y^2)}\dd y,\\
 \Psi(c,A,v)&=\widetilde F_c(v)-\eta_{c,A}v e^{2c(v+v^2)},\\
 \mathcal K(c,A,v)&=-\int_0^1
  \partial_v\Psi(c,A,v+\lambda(A^2-v))\dd\lambda.
\end{align*}
The formulas for $\widetilde F_c$, $\Psi$, and $\mathcal K$ are smooth for $A>0$ and real $c$. For the parameters in $\mathcal P$, direct differentiation gives
\[
 \partial_v\Psi
 =-\tfrac12 e^{-2c(v+v^2)}
 -\eta_{c,A}e^{2c(v+v^2)}(1+2cv+4cv^2)<0
 \qquad (0\leq v\leq A^2).
\]
Since $\Psi(c,A,A^2)=0$, the fundamental theorem of calculus gives the positive smooth factorization
\[
 f_{c,A}(a)=(A^2-a^2)\mathcal K(c,A,a^2),\qquad
 \mathcal K(c,A,0)=A^{-2},\quad
 \mathcal K(c,A,A^2)=\frac{-f_{c,A}'(A)}{2A}.
\]
Smoothness on a parameter segment means that the tensor coefficients extend smoothly to an open neighborhood of each parameter value. To verify this at $(c_0,A_0)\in\mathcal P$, note that $\mathcal K(c_0,A_0,A_0^2\xi)$ has a positive minimum on $0\leq\xi\leq1$. By continuity and compactness, it stays positive for all such $\xi$ on a sufficiently small open neighborhood of $(c_0,A_0)$. On that neighborhood, $B=1/(A\mathcal K(c,A,A^2))$, the positive square roots, and their reciprocals are smooth. This argument applies also at $c_0=0$ and at the meeting point of the two parameter segments.

This factorization cancels the apparent singularity in the coefficient of $\mathrm dx^2$. Suppressing the parameters, we obtain the fixed-coordinate coefficients
\[
 q(x)^2=\frac{e^{-2w(A\sin x)}}{\mathcal K(c,A,A^2\sin^2x)},\qquad
 b(x)^2=B^2 e^{-2w(A\sin x)}A^2\cos^2x\,
                   \mathcal K(c,A,A^2\sin^2x).
\]
Take $q>0$ and $b=BA\cos x\,e^{-w(A\sin x)}\sqrt{\mathcal K(c,A,A^2\sin^2x)}$ on the orbit interval. These formulas give the full parity at both ends, jointly in the parameters. At the first end $a_x(0)=A=q(0)$. At the second, the ratio of $-b_x(\pi/2)$ to $q(\pi/2)$ is $BA\mathcal K(c,A,A^2)=B(-f'(A))/2=1$. Thus the collapsing radii have unit derivative with respect to inward arclength. At $(c,A)=(0,\varepsilon)$, the formulas reduce to $f=1-a^2/\varepsilon^2$ and $B=\varepsilon$, giving $h_{0,\varepsilon}=\varepsilon^2\gamma$.
At the first core, introduce $X=x\cos\theta$, $Y=x\sin\theta$, together with a local angular coordinate $\phi$ on the surviving circle. These are coordinates for the fixed sphere: the first complex component gives the normal disk coordinates $(\sin x/x)(X,Y)$, whose radial factor is smooth in $x^2$ and equals one at zero. The coordinate change is therefore a local diffeomorphism at the core. With $u(x)=a(x)/x$, the normal part is
\[
 q^2\,\mathrm dx^2+a^2\,\mathrm d\theta^2
 =u^2(\mathrm dX^2+\mathrm dY^2)
 +\frac{q^2-u^2}{x^2}(X\,\mathrm dX+Y\,\mathrm dY)^2.
\]
The displayed coefficient formulas show that $q,u,b$ are smooth functions of $x^2$ and the parameters, with $u(0)=q(0)>0$. If $q^2-u^2=N(x^2)$, then $N(0)=0$ and $N(v)/v=\int_0^1N'(\lambda v)\dd\lambda$ is smooth, also in the parameters. Every Cartesian coefficient is therefore smooth in $X,Y$, since $x^2=X^2+Y^2$, and the tensor is positive definite at the core. Including the surviving circle, the metric at this core is $A^2(\mathrm dX^2+\mathrm dY^2)+B^2\mathrm d\phi^2$.

At the second core, put $y=\pi/2-x$, $U=y\cos\phi$, $V=y\sin\phi$, and $v_*=b/y$. The same radial coordinate change, now with factor $\sin y/y$, identifies $(U,V,\theta)$ with a smooth chart of the fixed sphere. Write $K_A=\mathcal K(c,A,A^2)$. Since $BAK_A=1$,
\[
 v_*(0)=BAe^{-w(A)}\sqrt{K_A}
       =\frac{e^{-w(A)}}{\sqrt{K_A}}=q(\pi/2).
\]
The functions $q,v_*,a$ are smooth in $y^2$ and the parameters. Moreover,
\[
 q^2\mathrm dy^2+b^2\mathrm d\phi^2
 =v_*^2(\mathrm dU^2+\mathrm dV^2)
  +\frac{q^2-v_*^2}{y^2}(U\mathrm dU+V\mathrm dV)^2.
\]
The same integral division formula makes the last coefficient smooth. At this core the full metric is $q(\pi/2)^2(\mathrm dU^2+\mathrm dV^2)+A^2\mathrm d\theta^2$, which is positive definite. These two local extensions agree with the interior tensor on their overlaps, proving the claimed joint smooth extension.
\end{proof}

\subsection{Ricci curvature and natural integrals}
\begin{lemma}\label{lem:riccislices}
Set $\kappa=23/32$. On the whole parameter set $\mathcal P$, $\Ric(h_{c,A})\geq\kappa h_{c,A}$. On the segment $c=1$ the stronger inequality $\Ric(h_{1,A})\geq h_{1,A}$ holds. For $A\geq1$, writing $h_A=h_{1,A}$,
\begin{equation}\label{eq:massvolume}
 \Vol(h_A)\leq\frac{\pi^2}{3A^3},\qquad
 I(h_A)=\int_{\Sph^3}\Scal_{h_A}\dd V_{h_A}\geq4\pi^2 A.
\end{equation}
\end{lemma}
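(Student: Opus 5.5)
The plan is to work in the orbit coordinate $a$ and use the diagonal cohomogeneity-one form $h=\mathrm dt^2+a(t)^2\mathrm d\theta^2+b(t)^2\mathrm d\phi^2$. Here $\mathrm da/\mathrm dt=e^{w}\sqrt f$ and $b=Be^{-w}\sqrt f$. For such a metric the Ricci tensor is diagonal in the frame $e_t,e_\theta/a,e_\phi/b$, with
\[
 \Ric_{tt}=-\frac{a_{tt}}{a}-\frac{b_{tt}}{b},\qquad
 \Ric_{\theta\theta}=-\frac{(a_tb)_t}{ab},\qquad
 \Ric_{\phi\phi}=-\frac{(ab_t)_t}{ab},
\]
where the last two are the normalized frame components. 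So $\Ric\geq\kappa h$ reduces to three scalar inequalities on $0<a<A$. These extend to the core circles by continuity, since Lemma~\ref{lem:smooth} gives smoothness there.

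\textbf{The $\theta$ direction.} Convert to $a$ derivatives using $\partial_t=e^{w}\sqrt f\,\partial_a$. Then $a_tb=Bf$, and
\[
 \Ric_{\theta\theta}=-\frac{e^{2w}f'}{a}=1+2\eta e^{4w}(1+aw')\geq1,
\]
using $F'=-ae^{-2w}$ and $\eta>0$. This holds on all of $\mathcal P$.

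\textbf{The $\phi$ and radial directions.} Here $b_t=B(f'/2-w'f)$, so
\[
 \Ric_{\phi\phi}=-\frac{e^{2w}}{a}\Bigl(\frac{af'}2-aw'f\Bigr)'.
\]
For $\Ric_{tt}$ I will compute $a_{tt}=e^{2w}\bigl(w'f+f'/2\bigr)$ and the analogous expression for $b_{tt}/b$. Substituting $f=F-\eta\mathcal H$, each expression splits into two parts. The first is an $F$-part, involving $w',w''$ and $F$. The second is $\eta$ times an $\mathcal H$-part; this is where the paper's \eqref{eq:DH} should enter, showing that the $\mathcal H$-part has a favorable sign.

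The main obstacle is the $F$-part of $\Ric_{\phi\phi}$ and $\Ric_{tt}$, which contains negative terms from $w''$ and $(w')^2f$. I would try to use $W=a^2+a^4$ explicitly: $w'=2c(a+2a^3)$ and $w''=2c(1+6a^2)$. I would also use the bounds $3/4\leq F_1\leq1$ and $F_c\geq31/32$ from the proof of Lemma~\ref{lem:smooth}, aiming to show each $F$-part is $\geq1$ when $c=1$. For the segment $A=\varepsilon$, $0\leq c\leq1$, I would use crude bounds ($a\leq1/4$, so all $w$-terms are small) to get the weaker constant $\kappa=23/32$.

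\textbf{Volume.} The volume form is $B a e^{-2w}\,\mathrm da\,\mathrm d\theta\,\mathrm d\phi$, so
\[
 \Vol(h_A)=4\pi^2B\int_0^Aae^{-2W}\dd a\leq\pi^2B.
\]
It then suffices to show $-f'(A)\geq6A^3$. From the formula for $f'$ in Lemma~\ref{lem:smooth}, together with $\eta=F(A)/(A^2e^{2W(A)})$ and $F\geq3/4$, we get
\[
 -f'(A)\geq2\eta Ae^{2W}(1+2A^2+4A^4)\geq\tfrac32A^{-1}\cdot4A^4=6A^3.
\]
Hence $B\leq1/(3A^3)$ and $\Vol(h_A)\leq\pi^2/(3A^3)$.

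\textbf{Total scalar curvature.} Use the flux identity. Write $\Scal=\Ric_{tt}+\Ric_{\theta\theta}+\Ric_{\phi\phi}$, and note that $\Ric_{tt},\Ric_{\theta\theta}\geq0$. Since $\mathrm dV=ab\,\mathrm dt\,\mathrm d\theta\,\mathrm d\phi$,
\[
 I(h_A)\geq\int\Ric_{\phi\phi}\dd V=-4\pi^2\bigl[ab_t\bigr]_{t=0}^{t=L}.
\]
At $t=0$ we have $a=0$. At $t=L$ we have $a=A$ and $b_t=-1$, by the slope normalization from Lemma~\ref{lem:smooth}. Therefore $I(h_A)\geq4\pi^2A$.
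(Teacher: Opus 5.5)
Your setup is right, and several pieces are correct and agree with the paper: the $\theta$-eigenvalue $-e^{2w}f'/a=1+2\eta e^{4w}(1+aw')\geq1$, the volume bound via $-f'(A)\geq 6A^3$, and the $\phi$-flux $-4\pi^2[ab_t]_0^{L}=4\pi^2A$. There is, however, a genuine gap at the heart of the lemma, namely the lower bound for the radial and $\phi$ eigenvalues, which is where both constants $1$ and $23/32$ come from. You only announce that the $F$-part ``should'' be $\geq1$. You also defer the sign of the $\mathcal H$-part to \eqref{eq:DH}, but that identity is proved inside this lemma and cannot be quoted. Your description of the obstacle is also inaccurate. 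No $(w')^2f$ term survives in these eigenvalues, and $w''$ enters favourably, not negatively. Your bound $I(h_A)\geq4\pi^2A$ drops $\int\Ric_{tt}\,\mathrm dV$ using $\Ric_{tt}\geq0$, so it rests on the same missing step.

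The missing computation runs as follows. Write $\lambda_t=e^{2w}D_w[f]$ with $D_w[f]=f(w''-w'/a)+w'f'-\tfrac12(f''+f'/a)$. From $F'=-ae^{-2w}$ you get $F''=(2aw'-1)e^{-2w}$, so the derivative terms collapse to $w'F'-\tfrac12(F''+F'/a)=(1-2aw')e^{-2w}$. Since $w''-w'/a=8ca^2$, which is exactly where the quartic part of $W$ is needed,
\[
 e^{2w}D_w[F]=1+8ca^2Fe^{2w}-2caW'.
\]
A similar calculation gives $D_w[\mathcal H]=-\mathcal H(2/a^2+4w'/a)$. Hence the correction adds $\eta e^{4w}(2+4aw')\geq0$ to $\lambda_t$.

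No separate $\phi$ estimate is needed, because
\[
 \lambda_\phi-\lambda_t=\frac{\ddot a-\dot a\dot b/b}{a}=\frac{2e^{2w}w'f}{a}\geq0.
\]

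The two cases then follow. On $A=\varepsilon$, drop the positive terms and use $2caW'\leq4\varepsilon^2+8\varepsilon^4=9/32$, which gives $\kappa=23/32$. On $c=1$, use $F_1e^{2W}\geq\tfrac34(1+2W)\geq\tfrac12+a^2$, so $8a^2F_1e^{2W}\geq4a^2+8a^4=2aW'$ and $\lambda_t\geq1$.

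With this in place your one-flux argument for $I(h_A)$ is valid. The paper also uses the $\theta$-flux $4\pi^2B_A$ and $\lambda_t\geq1$ to get the sharper $I(h_A)\geq4\pi^2(A+B_A)+\Vol(h_A)$, but that is not needed for the statement.
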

\begin{proof}
Fix $(c,A)\in\mathcal P$ and suppress these parameters. Use the arclength coordinate and coefficient bounds from Lemma~\ref{lem:smooth}. All $a$ derivatives hold $c,A$, and hence $\eta_{c,A}$ and $B$, fixed.

For a doubly warped metric $\mathrm dt^2+a^2\mathrm d\theta^2+b^2\mathrm d\phi^2$, use the orthonormal frame $e_0=\partial_t$, $e_1=a^{-1}\partial_\theta$, $e_2=b^{-1}\partial_\phi$ in the interior. Dots denote $t$ derivatives. Its nonzero covariant derivatives are
\[
 \nabla_{e_1}e_0=\frac{\dot a}{a}e_1,\quad
 \nabla_{e_1}e_1=-\frac{\dot a}{a}e_0,\quad
 \nabla_{e_2}e_0=\frac{\dot b}{b}e_2,\quad
 \nabla_{e_2}e_2=-\frac{\dot b}{b}e_0.
\]
For example, $[e_0,e_1]=-(\dot a/a)e_1$ and the convention $R(X,Y)Z=\nabla_X\nabla_YZ-\nabla_Y\nabla_XZ-\nabla_{[X,Y]}Z$ give $\langle R(e_0,e_1)e_1,e_0\rangle=-\ddot a/a$. The other two planes give
\[
 K_{t\theta}=-\frac{\ddot a}{a},\qquad
 K_{t\phi}=-\frac{\ddot b}{b},\qquad
 K_{\theta\phi}=-\frac{\dot a\dot b}{ab}.
\]
The connection formulas also give zero off-diagonal Ricci entries. Its eigenvalues are $\lambda_t=K_{t\theta}+K_{t\phi}$, $\lambda_\theta=K_{t\theta}+K_{\theta\phi}$ and $\lambda_\phi=K_{t\phi}+K_{\theta\phi}$. For \eqref{eq:toricmetric}, $\dot a=e^w\sqrt f$, $\dot a b=Bf$, and $\dot b=B(f'/2-w'f)$. Define the linear differential expression
\[
 D_w[f]=f\left(w''-\frac{w'}a\right)+w'f'
                 -\frac12\left(f''+\frac{f'}a\right).
\]
The second arclength derivatives and the remaining sectional curvature are
\[
 \ddot a=e^{2w}(w'f+\tfrac12f'),\qquad
 \frac{\ddot b}{b}=e^{2w}(\tfrac12f''-w''f-w'f'),\qquad
 K_{\theta\phi}=\frac{e^{2w}}a(w'f-\tfrac12f').
\]
Differentiating with $\mathrm d/\mathrm dt=e^w\sqrt f\,\mathrm d/\mathrm da$ gives all three Ricci eigenvalues:
\begin{equation}\label{eq:slicericci}
 \lambda_t=e^{2w}D_w[f],\qquad
 \lambda_\theta=-e^{2w}\frac{f'}a,\qquad
 \lambda_\phi=\lambda_t+2e^{2w}f\frac{w'}a.
\end{equation}
To evaluate this expression, use $F_c'=-ae^{-2w_c}$, $F_c''=(-1+2aw_c')e^{-2w_c}$, and $\mathcal H_c'/\mathcal H_c=2/a+2w_c'$. In particular, $\mathcal H_c''/\mathcal H_c=(2/a+2w_c')^2-2/a^2+2w_c''$. Substitution, using $w_c''-w_c'/a=8ca^2$, yields
\begin{align}
 e^{2w_c}D_{w_c}[F_c]
   &=1+8ca^2F_ce^{2w_c}-2caW',\\
 D_{w_c}[\mathcal H_c]&=-\mathcal H_c\left(\frac2{a^2}+\frac{4w_c'}a\right).
 \label{eq:DH}
\end{align}
The correction contributes a positive term to the radial eigenvalue:
\[
 \lambda_t=1+8ca^2F_ce^{2w_c}-2caW'
            +\eta_{c,A}e^{4w_c}(2+4aw_c').
\]
Also, $\lambda_\theta=1+\eta_{c,A}e^{2w_c}\mathcal H_c'/a\geq1$. Since $w_c'\geq0$, $\lambda_\phi\geq\lambda_t$. At $A=\varepsilon$,
\[
 \lambda_t\geq1-2caW'\geq1-4\varepsilon^2-8\varepsilon^4
       =\frac{23}{32}.
\]
For $c=1$, use $F_1e^{2W}\geq\frac34(1+2a^2)\geq\frac12+a^2$ to obtain
\[
 e^{2W}D_W[F_1]=1+8a^2F_1e^{2W}-4a^2-8a^4\geq1.
\]
The inequalities extend to both core circles by smoothness.

The two angles have period $2\pi$, so their integration contributes $4\pi^2$. The two core circles have zero three-dimensional volume. Thus integrals over the whole sphere can be computed on the interior orbit interval with natural density $ab\dd t\dd\theta\dd\phi$.

For the volume and scalar integrals, set $c=1$ and suppress this parameter from the notation. Because $\mathrm dt=e^{-W}f^{-1/2}\mathrm da$ and $b=Be^{-W}f^{1/2}$, the factors involving $f$ cancel in the natural volume form. Thus
\begin{equation}\label{eq:naturalvolume}
 V(A)=4\pi^2\int_0^{L_{1,A}}ab\dd t
     =4\pi^2 B_A\int_0^A a e^{-2W(a)}\dd a\leq\pi^2 B_A.
\end{equation}
The first derivative formulas give all endpoint data, with $t$ increasing from the first core to the second:
\[
 (a,b,\dot a,\dot b)|_{t=0}=(0,B_A,1,0),\qquad
 (a,b,\dot a,\dot b)|_{t=L_{1,A}}=(A,0,0,-1).
\]
For $A\geq1$,
\[
 -f_A'(A)=Ae^{-2W(A)}+F_1(A)\left(\frac2A+2W'(A)\right)
     \geq6A^3,
\]
Thus $B_A\leq1/(3A^3)$. For the scalar integral, the tangential Ricci eigenvalues satisfy $\lambda_\theta ab=-(\dot a b)^{\cdot}$ and $\lambda_\phi ab=-(a\dot b)^{\cdot}$, whence
\[
 \int\lambda_\theta\dd V_{h_A}=4\pi^2B_A,\qquad
 \int\lambda_\phi\dd V_{h_A}=4\pi^2A.
\]
Indeed, integrating first over a truncated orbit interval and passing to the endpoints gives $-4\pi^2[\dot a b]_0^{L_{1,A}}=4\pi^2B_A$ and $-4\pi^2[a\dot b]_0^{L_{1,A}}=4\pi^2A$. Boundedness of the Ricci tensor on the compact sphere justifies passage to the endpoints.
Combining the two flux identities with the trace relation for scalar curvature gives
\[
 I(h_A)=4\pi^2(A+B_A)
       +\int_{\{0<x<\pi/2\}}\lambda_t\dd V_{h_A}.
\]
Since $\lambda_t\geq1$ on the principal orbits, we obtain $I(h_A)\geq4\pi^2(A+B_A)+V(A)$ and hence \eqref{eq:massvolume}.
\relax{}
\end{proof}

\section{A path of cross-sections}\label{sec:path}

We join the slices of Lemma~\ref{lem:riccislices} to the round metric. To control the resulting terms in the ambient Ricci tensor, we make the volume contraction spatially constant and reparameterize the path to bound its deformation endomorphism, parameter derivative, and spatial divergence.

\relax{}
Four parameters appear, with distinct roles: $A$ labels the explicit toric slices, $z$ joins those slices into a raw path, $s$ is a slowed parameter on the same path, and $r$ is the distance variable of the final manifold. The successive changes are
\[
 h_A\ \longrightarrow\ k_z\ \longrightarrow\ \bar k_s
 \ \longrightarrow\ \rho(r)^2\bar k_{s(r)}.
\]
\relax{}

\subsection{A path with nonincreasing volume}

We first show that the explicit family $h_{1,A}$ already has decreasing total volume for $A\geq1$. The finite connecting segment will be treated in Proposition~\ref{prop:path}.

\begin{lemma}\label{lem:decreasing}
The natural volume $V(A)=\Vol(h_{1,A})$ is strictly decreasing for $A\geq1$.
\end{lemma}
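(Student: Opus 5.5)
The plan is to turn the volume formula \eqref{eq:naturalvolume} into a closed expression in $A$ and check that its logarithmic derivative is negative. Set $c=1$ and
\[
 J(A)=\int_0^A y e^{-2W(y)}\dd y=1-F_1(A).
\]
By \eqref{eq:naturalvolume} and \eqref{eq:coefficients},
\[
 V(A)=4\pi^2B_AJ(A)=\frac{8\pi^2 J(A)}{D(A)},
\]
where
\[
 D(A)=-f_A'(A)=Ae^{-2W(A)}+(1-J(A))\Bigl(\frac2A+2W'(A)\Bigr).
\]
This last identity was already used in the proof of Lemma~\ref{lem:riccislices}. Note that $D$ is an explicit function of the single variable $A$: $\eta_{1,A}$ has been eliminated, since $f_A'(A)=F_1'(A)-\eta_{1,A}\mathcal H_1'(A)$ and $\eta_{1,A}\mathcal H_1(A)=F_1(A)$. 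Strict decrease of $V$ is therefore equivalent to
\[
 J'(A)D(A)<J(A)D'(A)\qquad(A\geq1).
\]

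Next I compute the derivatives, using $J'=Ae^{-2W}$ and $W'=2A+4A^3$:
\[
 D'(A)=-e^{-2W}\bigl(1+4AW'\bigr)+(1-J)\Bigl(4+24A^2-\frac2{A^2}\Bigr).
\]
The factor $e^{-2W}(1-2AW')$ comes from differentiating $Ae^{-2W}$, and the factor $-J'(2/A+2W')$ combines with it. The point is that every term carrying $e^{-2W(A)}$ is superexponentially small, while the remaining term of $D'$ is polynomially large and positive.

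I then estimate both sides. From the proof of Lemma~\ref{lem:smooth}, $J\leq1/4$, so $1-J\geq3/4$. Since $J$ is increasing, $J(A)\geq J(1)=\int_0^1ye^{-2y^2-2y^4}\dd y$, and an explicit lower bound such as $J(1)\geq1/10$ follows from a crude pointwise bound on $[0,1/2]$. On the left,
\[
 J'D\leq Ae^{-2W(A)}\bigl(Ae^{-2W(A)}+2/A+2W'(A)\bigr)
 \leq e^{-2A^2-2A^4}\,P(A)
\]
for an explicit polynomial $P$. On the right,
\[
 JD'\geq \tfrac1{10}\Bigl(\tfrac34(2+24A^2)-e^{-2W}(1+4AW')\Bigr).
\]
At $A=1$ this is roughly $J(1)\cdot19\approx2.9$ against a left side of roughly $0.26$. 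For $A\geq1$, the left side decreases (a polynomial times $e^{-2A^4}$) while the right side increases, which gives the inequality.

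The main obstacle is only bookkeeping: the product $J'D$ must be bounded cleanly by the decaying exponential, and the numerical lower bound for $J(1)$ must be justified rigorously. I would handle $J(1)$ by using $e^{-2y^2-2y^4}\geq e^{-5/8}$ on $[0,1/2]$, which gives $J(1)\geq e^{-5/8}/8>1/16$. That bound is already ample against $e^{-4}$-size terms. I would handle the polynomial-versus-exponential comparison with $e^{-2A^4}A^k\leq e^{-2}$-type bounds valid for $A\geq1$.
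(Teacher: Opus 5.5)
Your proposal is correct and is essentially the paper's own proof. The paper also writes $V=8\pi^2J/D$ with $D=J'+(1-J)E$, computes $D'=(1-J)E'-J'(1/A+4W')$, and beats the $e^{-2W}$-weighted terms, bounded via monotonicity of $A^{2k}e^{-2W}$ on $[1,\infty)$, by $J(1-J)E'\ge\frac{3}{64}\cdot\frac34\cdot26$.

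One small slip: the crude bound on $[0,1/2]$ gives only $J(1)\ge e^{-5/8}/8\approx0.067$, not $1/10$. Your lower bound for $JD'$ should therefore use $1/16$; this still gives $JD'\ge(19.5-25e^{-4})/16>1$, against $J'D\le14e^{-4}+e^{-8}<0.26$.
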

\begin{proof}
Put
\[
 J(A)=\int_0^A a e^{-2W(a)}\dd a,\quad
 E(A)=\frac2A+4A+8A^3,\quad
 D(A)=J'(A)+(1-J(A))E(A).
\]
Then $D(A)=-\left.\partial_a f_{1,A}(a)\right|_{a=A}>0$ and $B_{1,A}=2/D$, so \eqref{eq:naturalvolume} gives $V=8\pi^2J/D$. In the remaining formulas of this proof, primes denote $A$ derivatives, and $F=1-J$. Since $F'=-J'$, $J''=J'(1/A-2W')$, and $E=2/A+2W'$, we have $D'=J''-J'E+FE'$. Thus
\begin{align}
 D'&=FE'-J'(1/A+4W'),\\
 JD'-J'D&=JFE'-JJ'(1/A+4W')-(J')^2-J'FE. \label{eq:volderiv}
\end{align}
For $A\geq1$ we have $3/64\leq J\leq1/4$, $3/4\leq F\leq1$, and $E'\geq26$. The lower bound for $J$ follows by integrating on $[0,1/2]$, where $W\leq5/16$ and $e^{-5/8}\geq1-5/8=3/8$. The upper bound follows from $J\leq\int_0^\infty a e^{-2a^2}\dd a=1/4$. The functions $A^{2k}e^{-2W(A)}$, $k=0,1,2$, decrease on $[1,\infty)$, since their logarithmic derivatives are $2k/A-4A-8A^3<0$. It follows that
\[
 J'(1/A+4W')\leq25e^{-4},\qquad
 J'E\leq14e^{-4},\qquad (J')^2\leq e^{-8}.
\]
For the first two bounds, the polynomials multiplying $e^{-2W}$ are respectively $1+8A^2+16A^4$ and $2+4A^2+8A^4$; their coefficients sum to $25$ and $14$. For the third, $A^2e^{-4W}=(A^2e^{-2W})e^{-2W}\leq e^{-8}$. Also $E'=-2/A^2+4+24A^2\geq26$.
The first five terms of the exponential series give $e^4\geq103/3>32$. Using this in \eqref{eq:volderiv}, we obtain
\[
 JD'-J'D\geq
 \frac3{64}\frac34\,26-\frac{25/4+14}{32}-\frac1{1024}
 =\frac{287}{1024}>0.
\]
Since $V'=8\pi^2(J'D-JD')/D^2$, the assertion follows.
\end{proof}

\begin{proposition}\label{prop:path}
There is a jointly smooth family of metrics $k_z$, $z\geq0$, on the fixed $\Sph^3$ such that
\begin{align}
 k_0&=\gamma,\qquad \Ric(k_z)\geq\kappa k_z,\qquad \kappa=23/32,\label{eq:pathric}\\
 \dd V_{k_z}&=v(z)\dd V_\gamma,\qquad v(0)=1,\quad v>0,\quad v'\leq0,\label{eq:pathdensity}\\
 I(k_z)&=\int_{\Sph^3}\Scal_{k_z}\dd V_{k_z}
       \longrightarrow\infty\quad (z\longrightarrow\infty).\label{eq:pathmass}
\end{align}
\end{proposition}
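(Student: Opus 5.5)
We build the family in two stages. First we form a raw path along the parameter set $\mathcal P$. Then we pull back by diffeomorphisms so that every volume form is a constant multiple of $\dd V_\gamma$.

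\emph{Raw path.} Parameterize $\mathcal P$ by a single variable. Run $c$ from $0$ to $1$ at $A=\varepsilon$, then $A$ from $\varepsilon$ to $\infty$ at $c=1$. Precompose with a smooth monotone reparameterization that is flat to infinite order at the corner $(1,\varepsilon)$; for instance, use the parameter $\chi(z)$ with $\chi$ having all derivatives vanishing at the junction. Lemma~\ref{lem:smooth} gives joint smoothness on each segment, so this yields a jointly smooth family $h(z)$ with $h(0)=\varepsilon^2\gamma$. Lemma~\ref{lem:riccislices} gives $\Ric\geq\kappa h(z)$ along the whole path, and $\Ric\geq h(z)$ on the $c=1$ part.

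\emph{Normalization.} Set $k'_z=\varepsilon^{-2}h(z)$. Ricci curvature is scale invariant as a $(0,2)$-tensor, so $\Ric(k'_z)=\Ric(h(z))\geq\kappa h(z)=\kappa\varepsilon^2k'_z$. This loses the constant, so instead we keep the rescaling only implicit. Alternatively, we rescale by $\mu(z)$ chosen so that the volume is as needed. Note that $\Ric(\lambda^2h)=\Ric(h)\geq\kappa\lambda^{-2}(\lambda^2h)$, so we need $\lambda\leq1$ to preserve the bound $\kappa$. Scaling down only decreases volume, which is compatible with $v'\leq0$.

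\emph{Volume.} On the segment $c=1$, $A\geq1$, Lemma~\ref{lem:decreasing} gives decreasing volume. On $[\varepsilon,1]$ and along the $c$-segment the volume of $h$ may increase. The fix is to set $k''_z=\lambda(z)^2h(z)$, with $\lambda(z)\leq\varepsilon^{-1}$ chosen so that $\Vol(k''_z)$ is nonincreasing and smooth, equals $2\pi^2$ at $z=0$, and $\lambda$ is constant ($=\lambda_\infty$) for $z$ large. For example, let $\lambda^3=\varepsilon^{-3}\min$-type smoothed running minimum of $\Vol(h)^{-1}$ ratios, i.e. $\lambda(z)^3=m(z)/\Vol(h(z))$ with $m$ a smooth nonincreasing function below $\Vol(h(z))\varepsilon^{-3}$ capped appropriately.

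For the Ricci bound we need $\lambda(z)\leq1$, which fails near $z=0$ where $\lambda=\varepsilon^{-1}$. However, the $c$-segment bound is computed intrinsically: $\Ric(h_{c,\varepsilon})\geq\kappa h_{c,\varepsilon}$ with $h_{c,\varepsilon}$ of size $\varepsilon^2$. The true eigenvalues there are of order $\lambda_t\varepsilon^{-2}$ in orthonormal frames of $h$. Indeed, the eigenvalues in \eqref{eq:slicericci} are for the metric itself, so $\Ric(h)\geq\kappa h$ means sectional-type curvature $\geq\kappa$. Then $\Ric(\lambda^2h)\geq\kappa\lambda^{-2}(\lambda^2h)$, and we need $\lambda\leq1$. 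So the honest route is to leave $h$ unscaled and compare volumes directly. We pass from $\gamma$ to $\varepsilon^2\gamma$ by homothety $\sigma(z)^2\gamma$, $\sigma$ decreasing from $1$ to $\varepsilon$, which has Ricci $\geq\sigma^{-2}\geq1$ and decreasing volume. We then use the $c$-segment and the $A$-segment, and check monotonicity of volume on those segments. This check (volume of $h_{c,\varepsilon}$ in $c$, and of $h_{1,A}$ for $A\in[\varepsilon,1]$) is the main obstacle. If it fails, we insert compensating homotheties $\lambda\leq1$ where volume would increase, which only improves the Ricci bound. Concretely, we use $\lambda(z)^3=\min_{z'\leq z}\Vol(h(z'))/\Vol(h(z))$, smoothed, which is $\leq1$. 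Since $\Vol(h_{1,A})$ decreases for $A\geq1$, $\lambda$ eventually becomes constant and $I$ scales by $\lambda\geq\lambda_\infty>0$. Hence $I\to\infty$ by \eqref{eq:massvolume}.

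\emph{Moser step.} Given the smooth family $\tilde k_z$ with $\Vol(\tilde k_z)=v(z)\,2\pi^2$ nonincreasing, write $\dd V_{\tilde k_z}=v(z)\varphi_z\dd V_\gamma$ with $\int\varphi_z\dd V_\gamma=2\pi^2$ and $\varphi_0=1$. Moser's trick gives a smooth family of diffeomorphisms $\Phi_z$ with $\Phi_0=\Id$ and $\Phi_z^*(\varphi_z\dd V_\gamma)=\dd V_\gamma$. The vector field is obtained by solving $\divg_\gamma$-equations from $\partial_z\varphi_z$, which depend smoothly on $z$. Set $k_z=\Phi_z^*\tilde k_z$. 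Pullback preserves Ricci bounds and $I$, giving \eqref{eq:pathric}–\eqref{eq:pathmass}.
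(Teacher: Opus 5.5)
Your proposal is correct in outline but reaches the common volume gauge by a different route. It shares the paper's skeleton: a round homothety down to $\varepsilon^2\gamma$, then the $c$-segment, then the $A$-segment with flat joins, a spatially constant factor $\lambda\le1$ to force nonincreasing volume, and finally a diffeomorphism making the density constant. For that last step you use the parametric Moser trick: with $\varphi_z$ of fixed total mass, solve $\Delta_\gamma u_z=-\partial_z\varphi_z$ and flow $\varphi_z^{-1}\nabla_\gamma u_z$ starting from $\Phi_0=\Id$. The paper instead builds an explicit $T^2$-equivariant change of orbit coordinate, $T_z=C_z^{-1}(\sin^2x)$. Most of its proof goes into showing that this map, written as $(\zeta_1,\zeta_2)\mapsto(\sqrt{\mathcal J_z(u)/u}\,\zeta_1,\sqrt{(1-\mathcal J_z(u))/(1-u)}\,\zeta_2)$, is smooth at the two collapsing circles, jointly in $z$ and across the joins. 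Your route gets all of this from elliptic theory with parameters and smooth dependence of flows on a compact manifold. The paper's route is elementary and keeps the toric form. That form is not used later, since Lemma~\ref{lem:slow} needs only joint smoothness on compact parameter intervals and the gauge \eqref{eq:pathdensity}.

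The step you should tighten is the volume correction. Write $U(z)=\Vol(h(z))$. The running minimum $\min_{z'\le z}U(z')$ is only Lipschitz, and smoothing it is not automatic. A symmetric mollification can rise above $U$ at the corners where $U$ turns upward, which gives $\lambda>1$ and loses $\Ric\geq\kappa k_z$. A forward one-sided mollification stays below $U$, but drops below $U(0)$ at $z=0$ and so loses $k_0=\gamma$.

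The paper avoids this by taking $\lambda=\ell(z)=\exp\bigl(-M\int_0^z\chi\bigr)$. Here $\chi$ is a smooth cutoff equal to $1$ until the path has reached $A\ge1$ and $0$ shortly afterwards, and $3M\geq\max|U'/U|$ on that compact stretch. Then $\ell$ is smooth, $\ell(0)=1$, $0<\ell\le1$, and $(\log(\ell^3U))'=U'/U-3M\chi\le0$; where $\chi<1$ this uses Lemma~\ref{lem:decreasing} and $A'\ge0$. Moreover $\ell$ is eventually constant, so $I\to\infty$ by \eqref{eq:massvolume}. With this factor you also do not need a separate check of volume monotonicity along the $c$-segment or on $A\in[\varepsilon,1]$.
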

We call \eqref{eq:pathdensity} the common volume gauge: the volume forms differ only by the spatially constant factor $v(z)$.
For smooth positive volume forms of equal total mass on a closed connected manifold, Moser~\cite[pp.~286--287]{Moser1965} gives a diffeomorphism relating the forms. We give an explicit change of orbit coordinate after normalizing the total mass. The condition $v'\leq0$ will give the logarithmic volume derivative the required sign in the radial Ricci estimate.
\begin{proof}
\emph{The connecting path.} Fix the smooth nondecreasing function $\eta:\R\to[0,1]$ given by
\[
 \eta(y)=\frac{\vartheta(y)}{\vartheta(y)+\vartheta(1-y)},\qquad
 \vartheta(y)=
 \begin{cases}e^{-1/y},&y>0,\\0,&y\leq0.\end{cases}
\]
It is zero for $y\leq0$, one for $y\geq1$, and all its positive-order derivatives vanish at the two endpoints. First define a raw path on the fixed sphere:
\begin{equation}\label{eq:rawpath}
 \widehat h_z=
 \begin{cases}
 [1-(1-\varepsilon)\eta(z)]^2\gamma,&0\leq z\leq1,\\
 h_{\eta(z-1),\varepsilon},&1\leq z\leq2,\\
 h_{1,\varepsilon+(1-\varepsilon)\eta(z-2)},&2\leq z\leq3,\\
 h_{1,1+(z-3)\eta(z-3)},&z\geq3.
 \end{cases}
\end{equation}
At $z=1,2,3$, the common values in \eqref{eq:rawpath} are $\varepsilon^2\gamma$, $h_{1,\varepsilon}$, and $h_{1,1}$, respectively. By Lemma~\ref{lem:smooth}, on both sides of each join every positive-order parameter derivative of $\widehat h_z$ vanishes: the varying parameter differs from its endpoint value by a flat function. At $z=4$, $A(z)$ agrees to all orders with $z-2$, because $\eta(z-3)-1$ is flat there.

Lemma~\ref{lem:riccislices} gives $\Ric(\widehat h_z)\geq\kappa\widehat h_z$; the round segment has Ricci eigenvalues at least two. Its volume $U(z)=\Vol(\widehat h_z)$ is positive and smooth. Lemma~\ref{lem:decreasing} gives $U'\leq0$ for $z\geq3$.
Indeed, on this last segment $A(z)=1+(z-3)\eta(z-3)\geq1$ and $A'(z)=\eta(z-3)+(z-3)\eta'(z-3)\geq0$, so $U'=V'(A)A'\leq0$.

\par\smallskip\noindent\emph{Nonincreasing total volume.} To correct the volume on the finite connecting segment, set
\[
 M=1+\frac13\max_{[0,3]}\left|\frac{U'}U\right|,\qquad
 \chi(z)=1-\eta(z-3),\qquad
 \ell(z)=\exp\left(-M\int_0^z\chi(y)\dd y\right).
\]
The maximum is finite. Moreover, $\ell(0)=1$, $0<\ell\leq1$, and $\ell$ is a fixed positive constant $\ell_\infty$ for $z\geq4$. Define $g_z=\ell(z)^2\widehat h_z$. Then
\[
 (\log\Vol(g_z))'=\frac{U'}U-3M\chi\leq0.
\]
For $z\leq3$ this follows from the choice of $M$, and for $z\geq3$ from $U'\leq0$. Since $\ell$ is constant on each spatial slice, the following inequalities hold for $(0,2)$ tensors: $\Ric(\ell^2\widehat h_z)=\Ric(\widehat h_z)\geq\kappa\widehat h_z\geq\kappa\ell^2\widehat h_z$, because $0<\ell\leq1$. In dimension three, total scalar curvature scales by one power of $\ell$, so $I(g_z)=\ell(z)I(\widehat h_z)$. Since $\ell=\ell_\infty>0$ for $z\geq4$, \eqref{eq:massvolume} and $A(z)=z-2$ there imply $I(g_z)\to\infty$.

\par\smallskip\noindent\emph{A common volume form.} It remains to arrange the density pointwise. In the fixed toric coordinate write
\[
 g_z=q_z(x)^2\mathrm dx^2+a_z(x)^2\mathrm d\theta^2
                         +b_z(x)^2\mathrm d\phi^2.
\]
Choose the positive square roots in the interior. The function $q_z$ extends positively to both endpoints; $a_z$ has a simple zero only at $x=0$, and $b_z$ only at $x=\pi/2$. These facts follow from the explicit round and toric formulas and are preserved by the positive scaling $\ell$. Define a normalized cumulative volume and its associated change of coordinate by
\begin{align}
 Z_z&=\int_0^{\pi/2}q_za_zb_z\dd x=\frac{\Vol(g_z)}{4\pi^2},&
 C_z(x)&=\frac1{Z_z}\int_0^x q_z(y)a_z(y)b_z(y)\dd y,\\
 T_z(x)&=C_z^{-1}(\sin^2x),&
 \Phi_z(x,\theta,\phi)&=(T_z(x),\theta,\phi).
\end{align}
We verify that this coordinate formula defines a smooth diffeomorphism even at the collapsed circles. The coefficient formulas of Lemma~\ref{lem:smooth} give
\[
 q_z(x)a_z(x)b_z(x)=\sin x\cos x\,R_z(\sin^2x),
 \qquad R_z(u)>0\quad(0\leq u\leq1),
\]
where $R_z(u)$ is jointly smooth up to both endpoints and across the path joins. On a round piece it is $[\ell(z)(1-(1-\varepsilon)\eta(z))]^3$, the cube of the radius of $g_z$; on a scaled toric piece it is $\ell^3BA^2e^{-2w(A\sqrt u)}$, a smooth function of $u$ because $w(a)=c(a^2+a^4)$.
The unscaled density factors have matching values and zero positive-order parameter derivatives at $z=1,2,3$; at the first join both equal $\varepsilon^3$. Multiplication by the common smooth factor $\ell(z)^3$ preserves matching derivatives. The substitution $u=\sin^2x$ also gives $Z_z=\frac12\int_0^1R_z(u)\dd u$.
\par\noindent\begin{minipage}{\linewidth}
In the variable $u$, define

\[
 \mathcal C_z(u)=\frac{\int_0^u R_z(v)\dd v}{\int_0^1 R_z(v)\dd v},
 \qquad \mathcal J_z=\mathcal C_z^{-1}.
\]\end{minipage}\par
Fix a finite $z_0$. The explicit formulas extend $R_z(u)$ smoothly to a neighborhood of $\{z_0\}\times[0,1]$; after shrinking it, positivity on the compact interval keeps this extension positive. The map $(z,u)\mapsto(z,\mathcal C_z(u))$ has Jacobian determinant $\mathcal C_z^\prime(u)>0$ there. Its local inverses therefore exist also at the interval endpoints and agree with the unique increasing inverse on $[0,1]$.

These inverses give a jointly smooth family $\mathcal J_z$ up to $u=0,1$, with $\mathcal J_z(0)=0$, $\mathcal J_z(1)=1$ and $\mathcal J_z'>0$. Moreover, $C_z(x)=\mathcal C_z(\sin^2x)$. The construction is local in $z$ and therefore needs no uniform derivative bound as $z\to\infty$. In the fixed model $\Sph^3\subset\mathbb C^2$, the proposed map is precisely
\[
 \Phi_z(\zeta_1,\zeta_2)=
 \left(\sqrt{\frac{\mathcal J_z(u)}u}\,\zeta_1,
       \sqrt{\frac{1-\mathcal J_z(u)}{1-u}}\,\zeta_2\right),
 \qquad u=|\zeta_1|^2.
\]
The apparent endpoint quotients extend positively and smoothly: they equal $\int_0^1\mathcal J_z'(\lambda u)\dd\lambda$ and $\int_0^1\mathcal J_z'(u+\lambda(1-u))\dd\lambda$, respectively. Hence the displayed formula is jointly smooth on the whole sphere. The squared moduli of its two components sum to one, and replacing $\mathcal J_z$ by $\mathcal C_z$ gives its smooth inverse. 

Set $k_z=\Phi_z^*g_z$. Differentiating $C_z(T_z(x))=\sin^2x$ gives
\[
 q_z(T_z)a_z(T_z)b_z(T_z)T_z'=2Z_z\sin x\cos x.
\]
Since $T_z'>0$, the Jacobian identity gives $\Phi_z^*\dd V_{g_z}=2Z_z\dd V_\gamma$ on the interior. Both sides are smooth global forms, so they agree also at the two cores. Thus \eqref{eq:pathdensity} holds with $v(z)=2Z_z=\Vol(g_z)/(2\pi^2)>0$, and $v^\prime/v=(\log\Vol(g_z))^\prime\leq0$. Since $g_0=\gamma$, one has $Z_0=1/2$ and $v(0)=1$. At $z=0$, $C_0(x)=\sin^2x$, so $\Phi_0=\Id$ and $k_0=\gamma$. Each $\Phi_z$ is a diffeomorphism of the fixed sphere. Naturality of curvature under pullback and the change-of-variables formula~\cite[Proposition 7.6 and pp.~30--32, 403--406]{Lee2018} therefore preserve both the tensor inequality and the total scalar integral under $k_z=\Phi_z^*g_z$.
\end{proof}

The constructed path also has vanishing volume. For $z\geq4$, we have $\widehat h_z=h_{1,z-2}$ and $\ell(z)=\ell_\infty>0$. Since pullback preserves total volume, \eqref{eq:massvolume} gives
\[
 \Vol(k_z)=\ell_\infty^3\Vol(h_{1,z-2})
 \leq\frac{\ell_\infty^3\pi^2}{3(z-2)^3}\longrightarrow0.
\]
In the common volume gauge this is $v(z)=\Vol(k_z)/(2\pi^2)\to0$.
\par
The derivative estimates below apply to this pulled-back path, including the parameter dependence of $\Phi_z$.

\subsection{Uniform control of parameter derivatives}\label{sec:clock}

Joint smoothness gives derivative bounds on compact parameter intervals. By slowing each successive interval, we obtain uniform bounds while keeping the parameter map proper.
\par
For a continuous map $\Theta:[0,\infty)\to[0,\infty)$, \emph{proper} means that inverse images of compact sets are compact; here this is equivalent to $\Theta(s)\to\infty$. Our parameter maps may have constant intervals and need not be diffeomorphisms. All tensor parameter derivatives below are taken on the fixed bundle $T\Sph^3$. For an endomorphism $E$ and a covector $\xi$, our norm conventions are
\[
 |E|_k^2=\sum_{i=1}^3 k(Ee_i,Ee_i),\qquad
 |\xi|_k^2=\sum_{i=1}^3\xi(e_i)^2,
 \qquad k(e_i,e_j)=\delta_{ij}.
\]
These are independent of the chosen $k$-orthonormal basis. We also use the operator norm $\|E\|_{\rm op}=\sup_{|v|_k=1}|Ev|_k$, which is bounded above by $|E|_k$. The current slice metric always determines these norms. For an endomorphism $E$, we write $(\divg E)_i=\nabla_jE^j{}_i$ for its divergence covector, where $\nabla$ is the Levi--Civita connection of the current metric $k$.

\begin{lemma}\label{lem:slow}
There is a smooth, nondecreasing, proper change of parameter for the path in Proposition~\ref{prop:path}. The resulting path $\bar k_s$, $s\geq0$, satisfies $\bar k_s=\gamma$ for $0\leq s\leq1$. It preserves the Ricci bound, density properties, and divergence of the scalar integral stated there. Define the endomorphism and its trace
\[
 \mathcal A=\frac12\bar k_s^{-1}\partial_s\bar k_s,\qquad
 \beta=\tr\mathcal A.
\]
Then, in the current metric $\bar k_s$,
\begin{equation}\label{eq:slowbounds}
 |\mathcal A|\leq1,\qquad |\partial_s\mathcal A|\leq1,\qquad
 |\divg\mathcal A|\leq1,\qquad
 -2\leq\beta\leq0,\qquad |\partial_s\beta|\leq2.
\end{equation}
Here endomorphisms have the Hilbert--Schmidt norm, divergence is a covector, and $\beta$ is constant on each spatial slice.
\end{lemma}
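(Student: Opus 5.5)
We reparameterize by a map $s\mapsto z=\Theta(s)$ and set $\bar k_s=k_{\Theta(s)}$, with $k_z$ the path of Proposition~\ref{prop:path}. Under this substitution
\[
 \mathcal A(s)=\Theta'(s)\,\mathcal A^{k}(\Theta(s)),\qquad
 \partial_s\mathcal A=\Theta''\mathcal A^{k}+(\Theta')^2\partial_z\mathcal A^{k},
\]
where $\mathcal A^{k}(z)=\frac12k_z^{-1}\partial_zk_z$. We also have $\divg\mathcal A=\Theta'\divg_{k_z}\mathcal A^k$ and $\beta=\Theta'\beta^k$.

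From \eqref{eq:pathdensity}, $\beta^k=\tfrac12\tr(k_z^{-1}\partial_zk_z)=\partial_z\log(\dd V_{k_z})/\dd V_\gamma$, which gives $\beta^k=\tfrac32\cdot\tfrac23(\log v)'\cdot\tfrac32$. More simply, $\beta^k=(\log v)'$. This is spatially constant and nonpositive. So once $\Theta'\geq0$, $\beta$ is constant on slices and $\beta\leq0$.

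The Ricci bound \eqref{eq:pathric}, the monotonicity of the volume density, and $I\to\infty$ all pass to $\bar k_s$ for any nondecreasing proper $\Theta$. These properties are pointwise in the parameter, and properness gives $\Theta(s)\to\infty$.

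The quantitative bounds follow from a slow clock, constructed as follows.

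\emph{Step 1 (compact bounds).} For each integer $j\geq0$, joint smoothness of $k_z$ on the compact set $[j,j+1]\times\Sph^3$ gives a finite constant $N_j\geq1$. It bounds $|\mathcal A^k|$, $|\partial_z\mathcal A^k|$, $|\divg\mathcal A^k|$, $|\beta^k|$ and $|\partial_z\beta^k|$ there, all measured in the current metric $k_z$. These quantities are tensorial and continuous, so the maxima exist.

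\emph{Step 2 (the clock).} Take $\Theta$ constant ($=0$) on $[0,1]$, so that $\bar k_s=\gamma$ there. On successive $s$-intervals of length $L_j$, let $\Theta$ increase from $j$ to $j+1$ by
\[
 \Theta(s)=j+\eta\bigl((s-s_j)/L_j\bigr),
\]
where $\eta$ is the flat cutoff from the proof of Proposition~\ref{prop:path}. All derivatives of order at least one of $\Theta$ vanish at the joins $s_j$, so $\Theta$ is smooth and nondecreasing. The map is proper because it passes through every integer.

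Then $0\leq\Theta'\leq C_\eta/L_j$ and $|\Theta''|\leq C_\eta/L_j^2$, with $C_\eta=\max(|\eta'|,|\eta''|)$. Choosing
\[
 L_j\geq 2C_\eta N_j+C_\eta
\]
gives the following:
\begin{itemize}
 \item $|\mathcal A|\leq C_\eta N_j/L_j\leq1$;
 \item $|\partial_s\mathcal A|\leq (C_\eta/L_j^2+C_\eta^2/L_j^2)N_j\leq1$;
 \item $|\divg\mathcal A|\leq1$;
 \item $|\beta|\leq1\leq2$;
 \item $|\partial_s\beta|\leq |\Theta''||\beta^k|+(\Theta')^2|\partial_z\beta^k|\leq2$.
\end{itemize}
These bounds hold after enlarging $L_j$ by a fixed factor if needed.

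\emph{Main obstacle.} The delicate points are keeping the norms in the current metric and exact smoothness at the joins. Both are handled here: the norms are evaluated pointwise at the same slice $k_{\Theta(s)}$, and the flatness of $\eta$ makes the concatenation smooth regardless of how $\Theta$ behaves at constant stretches. No uniform-in-$z$ control is needed, since each $L_j$ absorbs the local constant $N_j$.
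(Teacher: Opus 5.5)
Your proposal is correct and follows the paper's proof essentially step for step: the same flat-$\eta$ clock on intervals of length $L_j$ chosen from compact bounds on $[j,j+1]\times\Sph^3$, the same chain-rule formulas, and $\beta=(\log\bar v)'$ for spatial constancy and sign. The one difference is that you put $|\beta^k|$ and $|\partial_z\beta^k|$ into the compact constants, whereas the paper derives $|\beta|,|\partial_s\beta|\leq\sqrt3$ from the $\mathcal A$ bounds by Cauchy--Schwarz on the trace. Two small fixes: delete the garbled identity $\beta^k=\tfrac32\cdot\tfrac23(\log v)'\cdot\tfrac32$, since only $\beta^k=(\log v)'$ is right, and note that properness also needs $\sum_j L_j=\infty$, which holds because $L_j\geq C_\eta\geq1$.
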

\begin{proof}
For the original parameter $z$ set $\mathcal B_z=\frac12 k_z^{-1}\partial_z k_z$. For each integer $j\geq0$, choose $C_j\geq1$ bounding the norms of $\mathcal B_z$, $\partial_z\mathcal B_z$, and $\divg\mathcal B_z$ on $[j,j+1]\times\Sph^3$. These constants are finite by joint smoothness and compactness. Let $M_1=\sup|\eta'|$ and $M_2=\sup|\eta''|$, where $\eta$ is the step function used in \eqref{eq:rawpath}, and put
\[
 L_j=\max\{1,M_1C_j,\sqrt{(M_2+M_1^2)C_j}\},\qquad
 U_0=1,\quad U_{j+1}=U_j+L_j.
\]
Define $\Theta(s)=0$ for $s\leq1$, and on each interval $[U_j,U_{j+1}]$ set
\[
 \Theta(s)=j+\eta\left(\frac{s-U_j}{L_j}\right).
\]
At $U_{j+1}$ both formulas have value $j+1$, and every positive-order derivative vanishes on both sides by flatness of $\eta$; the join at $U_0=1$ has the same property. Each $L_j$ is finite and $L_j\geq1$, so $U_j\to\infty$ and the intervals cover $[1,\infty)$ without finite accumulation. For every nonnegative integer $N$, $s\geq U_N$ implies $\Theta(s)\geq N$; hence $\Theta$ is proper. Each interval maps onto $[j,j+1]$, so every finite original parameter is attained in finite time. Set $\bar k_s=k_{\Theta(s)}$. Its scalar integral is $I(\bar k_s)=I(k_{\Theta(s)})\to\infty$ by \eqref{eq:pathmass}, and its remaining geometric properties follow from Proposition~\ref{prop:path}. The chain rule gives
\[
 \mathcal A=\Theta'\mathcal B,\qquad
 \partial_s\mathcal A=\Theta''\mathcal B+(\Theta')^2\partial_z\mathcal B,
 \qquad \divg\mathcal A=\Theta'\divg\mathcal B.
\]
The choice of $L_j$ gives
\[
 |\mathcal A|,\ |\divg\mathcal A|
 \leq\frac{M_1C_j}{L_j}\leq1,\qquad
 |\partial_s\mathcal A|
 \leq\frac{(M_2+M_1^2)C_j}{L_j^2}\leq1.
\]
All these bounds are taken in the current slice metric. Differentiating $\bar k_s^{-1}\bar k_s=\Id$ gives $\partial_s(\bar k_s^{-1})=-\bar k_s^{-1}(\partial_s\bar k_s)\bar k_s^{-1}$. Thus
\[
 \partial_s\mathcal A=\tfrac12\bar k_s^{-1}\partial_s^2\bar k_s
                         -2\mathcal A^2
\]
is a sum of two self-adjoint endomorphisms in the current metric. For the trace, the local volume formula $\dd V_k=\sqrt{\det k}\,\mathrm dx^1\mathrm dx^2\mathrm dx^3$ yields
\[
 \partial_s\log\sqrt{\det\bar k_s}
   =\tfrac12\tr(\bar k_s^{-1}\partial_s\bar k_s)=\beta.
\]
Write $\bar v(s)=v(\Theta(s))$. The common-density identity therefore gives $\beta=(\log\bar v)'$, independent of the spatial point and nonpositive since $v'\leq0$ and $\Theta'\geq0$. Trace commutes with differentiation on the fixed bundle, so $\partial_s\beta=\tr(\partial_s\mathcal A)$. Applying Cauchy--Schwarz to the three eigenvalues of each self-adjoint endomorphism now gives $|\beta|,|\partial_s\beta|\leq\sqrt3<2$.
\end{proof}

In what follows, write $\bar v(s)=v(\Theta(s))$ and $S(s)=I(\bar k_s)$. The resulting path satisfies $\dd V_{\bar k_s}=\bar v(s)\dd V_\gamma$ and $\bar v'\leq0$. The volume limit just proved for $k_z$ and properness of $\Theta$ give $\bar v(s)\to0$, while $S(s)\to\infty$.

\section{The radial end and its Ricci tensor}\label{sec:ambient}

We now place the path from Lemma~\ref{lem:slow} on a radial end. The common volume gauge and the derivative bounds will allow us to control all three blocks of its Ricci tensor.

Fix the constants
\begin{equation}\label{eq:constants}
 K=100,\qquad T=1000,\qquad \alpha=\frac1{100},\qquad L=e^T.
\end{equation}
For $r\geq L$ set
\begin{equation}\label{eq:tail}
 t=\log r,\quad
 \rho(r)=\alpha r e^{K/t},\quad s(r)=\log(t/T),\quad
 G=\mathrm dr^2+\rho(r)^2\bar k_{s(r)}.
\end{equation}
Here $s(L)=0$ and $s(r)\to\infty$. The metric is smooth and positive definite for $r\geq L$. On $L\leq r\leq e^{eT}$, the path is constant: $\bar k_{s(r)}=\gamma$. Extending $\bar k_s$ by $\gamma$ for $s<0$ therefore gives a smooth round collar across $r=L$.
\par
For $r\geq L$, write $\Sigma_r=\{r\}\times\Sph^3$ and $h_r=\rho(r)^2\bar k_{s(r)}$ for a radial slice and its induced metric. Figure~\ref{fig:radial-shell} previews the completed metric of Section~\ref{sec:completion} and the shell used in its scalar integral estimate.
\begin{figure}[htbp]
\centering
\begin{tikzpicture}[x=1cm,y=1cm,>=stealth,
  every node/.style={font=\small},
  explanation/.style={anchor=west,align=left,text width=6.4cm}]
  \node[font=\normalsize] at (6.6,6.55)
    {Distance layers in one four-dimensional manifold $(\mathbb R^4,G)$};
  \begin{scope}[shift={(3,3.35)}]
    \fill[blue!14] (0,0) circle (2.7);
    \fill[white] (0,0) circle (1.35);
    \fill[gray!25] (0,0) circle (.55);
    \draw[gray!80] (0,0) circle (.55);
    \draw[blue!65!black,dashed,line width=.8pt] (0,0) circle (1.35);
    \draw[blue!65!black,line width=1pt] (0,0) circle (2.7);
    \draw[->,thick] (0,0) -- (3.3,0);
    \fill (0,0) circle (2pt);
    \node[above left] at (0,0) {$p$};
    \foreach \a/\txt in {.55/L,1.35/{R/2},2.7/R}{
      \fill (\a,0) circle (1.4pt);
      \node[below=5pt,fill=white,inner sep=1pt] at (\a,0) {$\txt$};
    }
    \node[above] at (3.13,0) {$r$};
    \draw[gray!80] (-.15,-.53) -- (-.15,-.7);
    \node at (-.15,-.95) {cap};
  \end{scope}
  \draw[blue!65!black] (4.16,5.79) -- (5.7,5.79) -- (6.4,5.3);
  \node[explanation] at (6.6,5.25)
    {\textbf{Each contour represents $\Sph^3$.}\\[3pt]
     $\Sigma_r=\{r\}\times\Sph^3$,\quad $r\geq L$,\\[3pt]
     with metric $h_r=\rho(r)^2\bar k_{s(r)}$.};
  \node[explanation] at (6.6,3.35)
    {\textbf{Radial motion changes the layer.}\\[3pt]
     $r\mapsto(r,\omega)$, with $\omega\in\Sph^3$ fixed.\\[3pt]
     The radial end continues beyond $R$.};
  \draw[blue!65!black] (4.8,1.8) -- (5.9,1.2) -- (6.4,1.2);
  \node[explanation] at (6.6,1.2)
    {\textbf{Integrate over the blue shell.}\\[3pt]
     $B_p(R)\setminus B_p(R/2)$,\quad $R>2L$.};
  \node at (6.6,-.2)
    {$\mathcal Q_{G,p}(R)\geq\dfrac{3\alpha}{16}
      \inf_{r\in[R/2,R]}S(s(r))\longrightarrow+\infty
      \quad(R\to\infty)$};
  \node[font=\footnotesize] at (6.6,-.85)
    {$S(s)=I(\bar k_s)$\qquad\textbullet\qquad
      distance-layer schematic; contours do not depict round metrics};
\end{tikzpicture}
\caption{A schematic of the completed manifold. The gray region is the compact cap, each contour represents a three-sphere, and the arrow follows a radial geodesic from $p$. The blue shell $R/2\leq r<R$ is a geodesic annulus by \eqref{eq:realballs}; its scalar integral gives the lower bound in \eqref{eq:divergence}. Both identities are proved in Section~\ref{sec:completion}.}
\label{fig:radial-shell}
\end{figure}

\par
The warping factor is close to the narrow cone $\rho=\alpha r$, but the factor $e^{K/\log r}$ makes it slightly concave. The small opening $\alpha$ magnifies the positive tangential Ricci endomorphism $\rho^{-2}\Ric_{\bar k_s}^{\sharp}$, with the index raised using $\bar k_s$, relative to errors of order $r^{-2}$. The slight concavity creates a positive radial term of order $r^{-2}(\log r)^{-2}$. In the induced slice metric $h_r=\rho^2\bar k_s$, the mixed covector has norm of order at most $r^{-2}(\log r)^{-1}$. Young's inequality bounds the mixed term by a portion of the positive tangential term and an error of order at most $r^{-2}(\log r)^{-2}$, which the radial margin absorbs.
The double logarithm makes the deformation derivatives small enough to fit these curvature scales, while $s(r)\to\infty$ preserves the divergence of the slice integral.

\subsection{All blocks of the Ricci tensor}
\relax{}
Put $x=\rho'/\rho$. For the outward unit normal $\partial_r$, we define the shape operator by raising one index of $\frac12\partial_rh_r$. Equivalently, $P(X)=\nabla^G_X\partial_r$. For this same normal, $P$ is the negative of the shape operator in Lee~\cite[pp.~235--236, Theorem 8.13(c)]{Lee2018}. Thus
\[
 P=x\Id+s'\mathcal A,\qquad H=\tr P=3x+s'\beta.
\]
Here and below, primes on $x$ and $s$ denote $r$ derivatives, whereas $\mathcal A_s=\partial_s\mathcal A$ and $\beta_s=\partial_s\beta$. The mean curvature $H$ is the trace, without division by the dimension. A superscript $\sharp$ on a Ricci tensor means that one index is raised with the indicated metric: $\mathcal T^\sharp$ and $\Ric_{h_r}^\sharp$ use $h_r^{-1}$, whereas $\Ric_{\bar k_s}^\sharp$ uses $\bar k_s^{-1}$. Finally, $\mathcal T=\Ric_G|_{T\Sph^3\times T\Sph^3}$ denotes the tangential block.

\begin{lemma}\label{lem:blocks}
The radial, mixed and tangential blocks of $\Ric_G$ are
\begin{align}
 \Ric_{rr}
   &=-3(x'+x^2)-(s''+2xs')\beta
                   -(s')^2(\beta_s+|\mathcal A|^2),\label{eq:rr}\\
 b:=\Ric_{r\,\cdot}
   &=s'\divg_{\bar k_s}\mathcal A,\qquad
 |b|_{h_r}^2=\rho^{-2}(s')^2|\divg\mathcal A|_{\bar k_s}^2,\label{eq:mixed}\\
 \mathcal T^\sharp
   &=\rho^{-2}\Ric_{\bar k_s}^\sharp
       -(x'+3x^2+\beta xs')\Id\notag\\
   &\hspace{8mm}-(s''+3xs'+\beta(s')^2)\mathcal A
       -(s')^2\mathcal A_s.\label{eq:tangent}
\end{align}
The sign in \eqref{eq:mixed} uses the stated outward normal and divergence convention. The coordinate calculation below fixes this sign; the positivity estimate uses the squared norm of $b$.
\end{lemma}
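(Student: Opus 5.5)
We will derive all three blocks from the standard Riemannian submersion/hypersurface formulas for a metric of the form $G=\mathrm dr^2+h_r$ on $(L,\infty)\times\Sph^3$. Here $\partial_r$ is a unit geodesic field, so the level sets $\Sigma_r$ are equidistant. With $P=\frac12h_r^{-1}\partial_rh_r$, the formulas are the Riccati equation, the Codazzi equation, and the Gauss equation combined with the radial curvature. Concretely, we will use
\[
 \Ric_{rr}=-\tr(\partial_rP)-|P|^2,\qquad
 \Ric(\partial_r,X)=(\divg_{h_r}P)(X)-\mathrm dH(X),
\]
\[
 \mathcal T^\sharp=\Ric_{h_r}^\sharp-\partial_rP-HP.
\]
These are checked in local coordinates $(r,y^i)$ using Christoffel symbols $\Gamma^i_{rj}=P^i{}_j$ and $\Gamma^r_{ij}=-\frac12\partial_rh_{ij}$. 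This coordinate check also fixes the sign of the mixed term for the outward normal, and it is the step where care with conventions is needed.

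We then substitute $h_r=\rho^2\bar k_{s(r)}$. Then $\frac12h_r^{-1}\partial_rh_r=x\Id+s'\mathcal A$, since the factor $\rho^2$ contributes $x\Id$ and the chain rule gives $s'\mathcal A$; hence $P=x\Id+s'\mathcal A$ and $H=3x+s'\beta$. Differentiating on the fixed bundle gives $\partial_rP=x'\Id+s''\mathcal A+(s')^2\mathcal A_s$.

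\emph{Radial block.} We have $\tr\partial_rP=3x'+s''\beta+(s')^2\beta_s$, and
\[
|P|^2=3x^2+2xs'\beta+(s')^2|\mathcal A|^2 .
\]
Summing these yields \eqref{eq:rr}.

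\emph{Mixed block.} Since $\beta$ is spatially constant by Lemma~\ref{lem:slow}, $\mathrm dH=0$ on each slice. The divergence of $x\Id$ also vanishes. The Levi-Civita connection of $h_r$ equals that of $\bar k_s$ because the two metrics differ by a constant factor on the slice, and divergence of a $(1,1)$ tensor is insensitive to this scaling. Hence $b=s'\divg_{\bar k_s}\mathcal A$. For the norm of a covector, $|\xi|^2_{\rho^2\bar k}=\rho^{-2}|\xi|^2_{\bar k}$, which gives \eqref{eq:mixed}.

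\emph{Tangential block.} Raising indices with $h_r=\rho^2\bar k_s$ gives $\Ric_{h_r}^\sharp=\rho^{-2}\Ric_{\bar k_s}^\sharp$, since Ricci is scale invariant as a $(0,2)$ tensor. Next we expand
\[
HP=(3x+s'\beta)(x\Id+s'\mathcal A)=(3x^2+\beta xs')\Id+(3xs'+\beta(s')^2)\mathcal A .
\]
Subtracting this together with $\partial_rP$ collects into \eqref{eq:tangent}.

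The main obstacle is purely bookkeeping. We must verify the sign conventions in the three structure equations, especially the Codazzi term $\divg P-\mathrm dH$ versus its negative for the chosen normal. We must also make sure that $\partial_rP$ is computed as the derivative of the endomorphism on the fixed bundle, so that no $\mathcal A^2$ term is missing. The Riccati identity handles this directly: in the coordinate computation, $R(\cdot,\partial_r)\partial_r=-\partial_rP-P^2$, where $\partial_rP$ is the $r$-derivative of the components $P^i{}_j$. Its trace gives the radial formula.
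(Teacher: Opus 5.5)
Your proposal is correct and follows essentially the paper's route. The paper likewise obtains $\Ric_{rr}=-H'-\tr(P^2)$, $\Ric_{ri}=(\divg P)_i-\partial_iH$ and $\mathcal T^\sharp=\Ric_{h_r}^\sharp-P'-HP$ from the Christoffel symbols $\Gamma^r_{ij}=-\mathrm{II}_{ij}$, $\Gamma^i_{rj}=P^i{}_j$, with the quadratic terms cancelling as in your Riccati-plus-Gauss bookkeeping, and then substitutes $P=x\Id+s'\mathcal A$ using the spatial constancy of $\beta$ and the scale invariance of the slice connection exactly as you do.
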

\begin{proof}
Take all parameter derivatives in fixed spatial charts.
\par
For a fixed slice metric $k$, the metric $\mathrm dr^2+\rho(r)^2k$ is a warped product in the sense of Bishop--O'Neill~\cite[Section 7]{BishopONeill1969}. Here $\bar k_{s(r)}$ also varies with $r$, producing additional deformation terms. We derive all contractions from the Gauss--Codazzi framework~\cite[Theorem 8.13]{Lee2018} with our convention. Use indices $i,j,k$ for the three tangential coordinates and Greek indices for all four coordinates; repeated indices are summed. Our Ricci convention, for which the unit sphere has positive curvature, is
\[
 \Ric_{\mu\nu}=\partial_\lambda\Gamma^\lambda_{\mu\nu}
 -\partial_\nu\Gamma^\lambda_{\mu\lambda}
 +\Gamma^\lambda_{\mu\nu}\Gamma^\sigma_{\lambda\sigma}
 -\Gamma^\sigma_{\mu\lambda}\Gamma^\lambda_{\nu\sigma}.
\]
Write $\mathrm{II}=\frac12\partial_rh_r=h_rP$. The nonzero symbols involving $r$ are $\Gamma^r_{ij}=-\mathrm{II}_{ij}$ and $\Gamma^i_{rj}=P^i{}_j$; all $\Gamma^\mu_{rr}$ and $\Gamma^r_{ri}$ vanish. The purely tangential symbols are those of $h_r$. The radial and mixed contractions are therefore
\begin{align*}
 \Ric_{rr}&=-\partial_r(P^i{}_i)-P^j{}_iP^i{}_j,\\
 \Ric_{ri}&=\partial_jP^j{}_i-\partial_iH
       +\Gamma^k_{jk}P^j{}_i-\Gamma^j_{ik}P^k{}_j
       =\nabla_jP^j{}_i-\partial_iH.
\end{align*}
In the tangential contraction, the terms whose indices are all tangential give $\Ric_{h_r}$. The remaining terms give
\[
 \mathcal T_{ij}=(\Ric_{h_r})_{ij}
       -\partial_r\mathrm{II}_{ij}-H\mathrm{II}_{ij}
       +2(h_rP^2)_{ij}.
\]
\[
 -\Gamma^r_{ik}\Gamma^k_{jr}=\mathrm{II}_{ik}P^k{}_j=(h_rP^2)_{ij},\qquad
 -\Gamma^k_{ir}\Gamma^r_{jk}=P^k{}_i\mathrm{II}_{jk}=(h_rP^2)_{ij}.
\]
In the last term of the Ricci formula, the two choices of radial index contribute the two copies of $h_rP^2$. Since $\partial_rh_r=2h_rP$, differentiating $\mathrm{II}=h_rP$ gives $\partial_r\mathrm{II}=2h_rP^2+h_rP'$. Thus these quadratic terms cancel before we raise an index. We obtain
\[
 \Ric_{rr}=-H'-\tr(P^2),\qquad
 \Ric_{ri}=(\divg P)_i-\partial_iH,\qquad
 \mathcal T^\sharp=\Ric_{h_r}^\sharp-P'-HP.
\]
\par
For \eqref{eq:tail}, the common volume gauge gives $\beta=\partial_s\log\bar v(s)$, so $H=3x+s'\beta$ is constant on each spatial slice; therefore the spatial derivative $\partial_iH$ in the Codazzi identity vanishes. On each fixed slice, $\rho(r)$ is a spatial constant. Thus $h_r=\rho^2\bar k_s$ has the same Levi--Civita connection as $\bar k_s$, $\Ric_{h_r}^\sharp=\rho^{-2}\Ric_{\bar k_s}^\sharp$, and covector squared norms scale by $\rho^{-2}$. Finally,
\[
 P'=x'\Id+s''\mathcal A+(s')^2\mathcal A_s,\qquad
 \tr(P^2)=3x^2+2xs'\beta+(s')^2|\mathcal A|^2.
\]
The derivative $P'$ is taken on the fixed tangent bundle. Moreover,
\[
 HP=(3x^2+xs'\beta)\Id+(3xs'+(s')^2\beta)\mathcal A.
\]
Also $H^\prime=3x^\prime+s^{\prime\prime}\beta+(s^\prime)^2\beta_s$. Since $\mathcal A$ is self-adjoint in the current metric, $\tr(\mathcal A^2)=|\mathcal A|^2$.
Substitution now gives \eqref{eq:rr} and \eqref{eq:tangent} term by term. For the mixed block, spatial constancy of $x,s',\beta$ gives $\divg P-\mathrm dH=s'\divg\mathcal A$, proving \eqref{eq:mixed} with the covector scaling already noted.
\end{proof}

The following radial identities follow by differentiating \eqref{eq:tail}. Primes on $x$ and $s$ denote $r$ derivatives. Since $\log\rho=\log\alpha+t+K/t$,
\[
 \frac{\dd t}{\dd r}=\frac1r,\qquad
 r^2x'=-1+\frac K{t^2}+\frac{2K}{t^3}.
\]
\begin{align}
 x&=\frac{1-K/t^2}{r},&
 x'+x^2&=-\frac{K(1-2/t-K/t^2)}{r^2t^2},\label{eq:radialderivatives}\\
 s'&=\frac1{rt},&
 s''&=-\frac1{r^2t}-\frac1{r^2t^2}.
 \label{eq:clockderivatives}
\end{align}
Since $t\geq T$, the constants in \eqref{eq:constants} also give
\begin{equation}\label{eq:warpingbounds}
 \alpha r\leq\rho(r)\leq\alpha e^{K/T}r,\qquad
 \rho^{-2}\geq\frac{5000}{r^2},\qquad
 0<x\leq\frac1r,\qquad \rho''<0.
\end{equation}
For the second inequality, use $\alpha^{-2}=10000$ and $e^{2K/T}=e^{1/5}\leq(1-1/5)^{-1}<2$, where the exponential bound follows by comparing its power series with the geometric series. The bounds for $x$ follow from $0<1-K/T^2\leq1-K/t^2\leq1$. For the last inequality, $\rho''/\rho=x'+x^2$ and $1-2/t-K/t^2\geq1-2/T-K/T^2>0$ in \eqref{eq:radialderivatives}.

\begin{proposition}\label{prop:tailricci}
The metric $G$ in \eqref{eq:tail} has positive definite Ricci tensor for every $r\geq L$.
\end{proposition}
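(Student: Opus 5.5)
The plan is to test $\Ric_G$ on an arbitrary nonzero vector $\nu=a\,\partial_r+V$, with $V$ tangent to $\Sigma_r$, and use the block decomposition of Lemma~\ref{lem:blocks}:
\[
 \Ric_G(\nu,\nu)=a^2\Ric_{rr}+2a\,b(V)+\mathcal T(V,V).
\]
It suffices to prove three things:
\begin{itemize}
\item a strictly positive lower bound $\Ric_{rr}\geq c_1r^{-2}t^{-2}$;
\item a lower bound $\mathcal T(V,V)\geq c_2\rho^{-2}|V|_{h_r}^2$;
\item the discriminant inequality $|b|_{h_r}^2<\Ric_{rr}\,c_2\rho^{-2}$.
\end{itemize}
Then the quadratic form in $(a,|V|_{h_r})$ is positive definite, by Cauchy--Schwarz $|b(V)|\leq|b|_{h_r}|V|_{h_r}$. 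Throughout I use only \eqref{eq:slowbounds}, \eqref{eq:radialderivatives}, \eqref{eq:clockderivatives}, \eqref{eq:warpingbounds} and $t\geq T=1000$.

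\emph{Radial block.} In \eqref{eq:rr}, the first term equals $3K(1-2/t-K/t^2)/(r^2t^2)$. With $K=100$ and $t\geq1000$ this is at least $299/(r^2t^2)$.

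For the second term I expand
\[
 s''+2xs'=\frac{1}{r^2t}\Bigl(1-\frac1t-\frac{2K}{t^2}\Bigr)>0 .
\]
Together with $\beta\leq0$ from the common volume gauge, the product $-(s''+2xs')\beta$ is nonnegative, so it can be dropped. This is exactly where $v'\leq0$ is used.

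The last term is bounded using $|\beta_s|\leq2$ and $|\mathcal A|^2\leq1$ by $(s')^2\cdot3=3/(r^2t^2)$. Hence $\Ric_{rr}\geq 296/(r^2t^2)$.

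\emph{Tangential block.} Raising an index with $h_r$, the Ricci bound of Lemma~\ref{lem:slow} gives $\rho^{-2}\Ric^\sharp_{\bar k_s}\geq\kappa\rho^{-2}\Id$ in the $h_r$-self-adjoint sense. By \eqref{eq:warpingbounds}, $\kappa\rho^{-2}\geq 5000\kappa\, r^{-2}>3500\,r^{-2}$.

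Each remaining term of \eqref{eq:tangent} is $O(r^{-2})$ in operator norm, with explicit small constants:
\begin{itemize}
\item $|x'+3x^2|\leq 4/r^2$, since $|x'|\leq r^{-2}$ and $x\leq 1/r$;
\item $|\beta xs'|\leq 2/(r^2t)$;
\item $|s''|+3xs'+2(s')^2\leq 6/(r^2t)$, multiplied by $\|\mathcal A\|_{\rm op}\leq1$;
\item $(s')^2\|\mathcal A_s\|_{\rm op}\leq 1/(r^2t^2)$.
\end{itemize}
Here I use that $\mathcal A$ and $\mathcal A_s$ are self-adjoint (Lemma~\ref{lem:slow}), so their operator norms are bounded by the Hilbert--Schmidt norms. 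These errors total less than $5/r^2$, which is well under a tenth of $\kappa\rho^{-2}$. Thus $\mathcal T^\sharp\geq\tfrac12\kappa\rho^{-2}\Id$, so one may take $c_2=\kappa/2$.

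\emph{Mixed term.} By \eqref{eq:mixed} and $|\divg\mathcal A|\leq1$,
\[
 |b|^2_{h_r}\leq\rho^{-2}(s')^2=\frac{\rho^{-2}}{r^2t^2}.
\]
The product $\Ric_{rr}\cdot\tfrac12\kappa\rho^{-2}$ is at least $296\cdot\tfrac{23}{64}\,\rho^{-2}/(r^2t^2)>100\,\rho^{-2}/(r^2t^2)$. So the discriminant condition holds with a wide margin, and Young's inequality $2|a\,b(V)|\leq a^2\Ric_{rr}/2+2|b|^2|V|^2/\Ric_{rr}$ closes the argument.

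\emph{Main obstacle.} The delicate point is the radial block. Its positive term is only of size $r^{-2}t^{-2}$, coming from the slight concavity $e^{K/t}$, so every competing radial term must be at least as small. The $(s')^2$ term is fine because the clock is double-logarithmic. The term linear in $s'$ is of larger order $r^{-2}t^{-1}$, and it can only be discarded by its sign. I would therefore first verify carefully that $s''+2xs'>0$ for $t\geq T$, and that $\beta\leq0$ is genuinely spatially constant by the common volume gauge. If either failed, the margin $300/(r^2t^2)$ could not absorb that term.
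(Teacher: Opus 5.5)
Your proposal is correct and follows essentially the same route as the paper: the three-block decomposition of Lemma~\ref{lem:blocks}, the sign argument $s''+2xs'>0$ with $\beta\leq0$ to discard the $O(r^{-2}t^{-1})$ radial term, domination of the tangential errors by $\kappa\rho^{-2}\geq5000\kappa r^{-2}$, and a Cauchy--Schwarz/Young step for the mixed block. The differences are cosmetic: your tangential error bound is cruder (total $<5/r^2$ versus the paper's $D_*<3$), and you put the Young weight on the radial side instead of the tangential side. Also, the paper records the slightly stronger $\mathcal T^\sharp\geq\frac12\rho^{-2}\Ric^\sharp_{\bar k_s}$ because it is reused in Proposition~\ref{prop:scalartail}, and your estimates give that bound as well.
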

\begin{proof}
We apply Lemma~\ref{lem:blocks} using the bounds \eqref{eq:slowbounds}, \eqref{eq:clockderivatives}, and \eqref{eq:warpingbounds}.

\par\smallskip\noindent\emph{The radial block.} Equation~\eqref{eq:clockderivatives} gives
\[
 s''+2xs'=\frac1{r^2t^2}\left(t-1-\frac{2K}{t}\right)>0,
\]
since $t\geq T=1000$ and $K=100$. Thus the term $-(s^{\prime\prime}+2xs^\prime)\beta$ in \eqref{eq:rr} is nonnegative. The remaining deformation term has absolute value at most $3/(r^2t^2)$ by \eqref{eq:slowbounds}. Consequently,
\begin{align}
 r^2t^2\Ric_{rr}
 &\geq3K(1-2/T-K/T^2)-3\notag\\
 &=\frac{29637}{100}>290.\label{eq:radialmargin}
\end{align}
\par\smallskip\noindent\emph{The tangential block.} $x\geq0$ implies $\beta xs'\leq0$, so its scalar contribution in \eqref{eq:tangent} is favorable. The remaining derivatives are
\begin{align*}
 r^2(x'+3x^2)&=2-\frac{5K}{t^2}+\frac{2K}{t^3}+\frac{3K^2}{t^4},\\
 r^2(s''+3xs')&=\frac2t-\frac1{t^2}
                                 -\frac{3K}{t^3}.
\end{align*}
The operator norm is at most the Hilbert--Schmidt norm in \eqref{eq:slowbounds}. Consequently the nonscalar part of the error satisfies
\[
 r^2\bigl\|(s''+3xs'+\beta(s')^2)\mathcal A
                  +(s')^2\mathcal A_s\bigr\|_{\rm op}
 \leq\frac2t+\frac5{t^2}+\frac{3K}{t^3}.
\]
Adding the scalar bound and dropping $-5K/t^2\leq0$ gives
\begin{equation}\label{eq:tangentmargin}
 \mathcal T^\sharp\geq\rho^{-2}\Ric_{\bar k_s}^\sharp
                          -\frac{D_*}{r^2}\Id,\qquad
 D_*=2+\frac2T+\frac5{T^2}+\frac{5K}{T^3}
                       +\frac{3K^2}{T^4}<3.
\end{equation}
Since $D_*<3<2500\kappa$, \eqref{eq:warpingbounds} and $\Ric_{\bar k_s}^{\sharp}\geq\kappa\Id$ give
\[
 \frac{D_*}{r^2}\Id
 \leq\frac\kappa2\rho^{-2}\Id
 \leq\frac12\rho^{-2}\Ric_{\bar k_s}^{\sharp}.
\]
Subtracting this from the intrinsic term in \eqref{eq:tangentmargin}, we obtain
\begin{equation}\label{eq:rhoestimate}
 \mathcal T^\sharp\geq\frac12\rho^{-2}\Ric_{\bar k_s}^{\sharp}
       \geq\frac{\kappa}{2}\rho^{-2}\Id>0.
\end{equation}
\relax{}
\par\smallskip\noindent\emph{The mixed block.} For $\xi=\tau\partial_r+v$, with $\tau\in\R$ and $v\in T\Sph^3$, equations \eqref{eq:mixed} and \eqref{eq:slowbounds} give $|b|_{h_r}^2\leq\rho^{-2}/(r^2t^2)$. By Cauchy--Schwarz and Young's inequality,
\[
 2|\tau b(v)|
 \leq\frac{\kappa}{4\rho^2}|v|_{h_r}^2
       +\frac{4\rho^2}{\kappa}|b|_{h_r}^2\tau^2
 \leq\frac{\kappa}{4\rho^2}|v|_{h_r}^2
       +\frac{4\tau^2}{\kappa r^2t^2}.
\]
Combining this with \eqref{eq:radialmargin} and \eqref{eq:rhoestimate} yields
\[
 \Ric_G(\xi,\xi)
 \geq\frac{\kappa}{4\rho^2}|v|_{h_r}^2
       +\frac{290-4/\kappa}{r^2t^2}\tau^2>0
 \qquad(\xi\ne0),
\]
since $\kappa=23/32$ and $290-4/\kappa>0$.
\end{proof}

\subsection{A scalar integral estimate}
Taking the trace of \eqref{eq:rhoestimate} gives the scalar integral estimate needed below.
\begin{proposition}\label{prop:scalartail}
For every $r\geq L$,
\begin{equation}\label{eq:scalarlower}
 \Scal_G\geq\frac12\rho^{-2}\Scal_{\bar k_{s(r)}},\qquad
 \int_{\Sph^3}\Scal_G\dd V_{h_r}
              \geq\frac12\rho(r)S(s(r)).
\end{equation}
\end{proposition}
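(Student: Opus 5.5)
The plan is to take traces of the block estimates from the proof of Proposition~\ref{prop:tailricci} and then integrate over the slice. Write $\Scal_G=\Ric_{rr}+\tr_{h_r}\mathcal T=\Ric_{rr}+\tr\mathcal T^\sharp$. Here we use the orthonormal frame $\{\partial_r,e_1,e_2,e_3\}$ with $e_i$ orthonormal for $h_r$; the mixed block does not enter the trace, because $\partial_r$ is a unit normal orthogonal to the slice. The radial margin \eqref{eq:radialmargin} gives $\Ric_{rr}>0$. The tangential estimate \eqref{eq:rhoestimate} gives the endomorphism inequality $\mathcal T^\sharp\geq\frac12\rho^{-2}\Ric_{\bar k_s}^\sharp$.

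Tracing this inequality, which is legitimate since both sides are self-adjoint with respect to $h_r$ (equivalently $\bar k_s$), gives $\tr\mathcal T^\sharp\geq\frac12\rho^{-2}\tr\Ric_{\bar k_s}^\sharp=\frac12\rho^{-2}\Scal_{\bar k_s}$. Here the index is raised with $\bar k_s^{-1}$, so the trace is exactly the scalar curvature of $\bar k_s$. Adding the nonnegative radial term yields the pointwise bound $\Scal_G\geq\frac12\rho^{-2}\Scal_{\bar k_{s(r)}}$ at every point of $\Sigma_r$.

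For the integral, I would use that $h_r=\rho(r)^2\bar k_{s(r)}$ with $\rho(r)$ spatially constant on $\Sigma_r$. In dimension three this gives $\dd V_{h_r}=\rho^3\dd V_{\bar k_s}$. Integrating the pointwise bound over $\Sph^3$ then gives
\[
\int_{\Sph^3}\Scal_G\dd V_{h_r}\geq\frac12\rho^{-2}\rho^{3}\int_{\Sph^3}\Scal_{\bar k_s}\dd V_{\bar k_s}=\frac12\rho(r)S(s(r)),
\]
by the definition $S(s)=I(\bar k_s)$.

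There is no real obstacle. The only point to check is that the trace inequality is applied to endomorphisms that are self-adjoint for the same metric, and that the radial term, being positive, can simply be discarded.
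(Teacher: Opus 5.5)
Your proposal is correct and follows the paper's own proof essentially verbatim. You trace \eqref{eq:rhoestimate}, add the positive radial term from \eqref{eq:radialmargin} via $\Scal_G=\Ric_{rr}+\tr\mathcal T^\sharp$, and integrate using $\dd V_{h_r}=\rho^3\dd V_{\bar k_s}$; your remark that the mixed block cannot enter the trace, because $G$ is in Gaussian form, is a detail the paper leaves implicit.
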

\begin{proof}
\relax{}Taking the trace of \eqref{eq:rhoestimate} and using $\Ric_{rr}>0$ from \eqref{eq:radialmargin}, we obtain the pointwise estimate because $\Scal_G=\Ric_{rr}+\tr\mathcal T^\sharp$. Finally, $\rho$ is constant on each slice and $\dd V_{h_r}=\rho^3\dd V_{\bar k_s}$, so integration gives
\[
 \int_{\Sph^3}\Scal_G\dd V_{h_r}
 \geq\frac12\rho(r)\int_{\Sph^3}\Scal_{\bar k_s}\dd V_{\bar k_s}
 =\frac12\rho(r)S(s(r)).
\]
\end{proof}

\section{Smooth completion and divergence on metric balls}\label{sec:completion}

We attach a rotationally symmetric cap along the round collar at $r=L$, then estimate the scalar integral and volume of distance balls.

\begin{lemma}\label{lem:cap}
There exists a smooth function $f:[0,L]\to[0,\infty)$ such that $f(r)=r$ near zero, $f(r)=\rho(r)$ near $L$, and
\[
 f''\leq0,\qquad 0<f'\leq1.
\]
Consequently $\mathrm dr^2+f(r)^2\gamma$ extends smoothly across the center and has nonnegative sectional curvature.
\end{lemma}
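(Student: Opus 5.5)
The plan is to build $f$ by prescribing its derivative. The aim is a smooth nonincreasing function $\psi$ on $[0,L]$ with $\psi=1$ near $0$ and $\psi=\rho'$ near $L$. Then set $f(r)=\int_0^r\psi$. The conditions $f''=\psi'\leq0$ and $0<f'\leq1$ are then automatic once $0<\rho'(r)\leq\psi\leq1$. The one remaining constraint is the integral condition $\int_0^{r_1}\psi=\rho(r_1)$ at some point $r_1$ slightly below $L$, after which we take $\psi=\rho'$. First I would record the facts about $\rho$ on $[L/2,L]$, where the formula \eqref{eq:tail} still makes sense since $\log(L/2)$ is close to $T$. There $\rho''<0$ by the same computation as in \eqref{eq:warpingbounds}, and $\rho'=x\rho>0$. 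Moreover $\rho'\leq\alpha e^{K/t}\leq 2\alpha<1$ and $\rho(r)\leq 2\alpha r<r$. So $\rho$ is a concave function lying below the identity and with slope less than one.

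Next, fix $r_1\in(L/2,L)$. Let $\mathcal S$ be the set of smooth nonincreasing $\psi$ on $[0,r_1]$ that equal $1$ near $0$, equal $\rho'$ near $r_1$ (extended so that $\psi$ continues as $\rho'$ on $[r_1,L]$), and satisfy $\psi\geq\rho'(r_1)$. The integral $\int_0^{r_1}\psi$ ranges over values near $r_1$, attained by dropping at the very end, down to values near $\rho'(r_1)r_1$, attained by dropping immediately. Concavity of $\rho$ together with $\rho(0^+)$ extrapolation gives $\rho(r_1)\geq\rho'(r_1)r_1$. Indeed $\rho(r_1)-r_1\rho'(r_1)=\int_0^{r_1}(\rho'(r)-\rho'(r_1))dr$ on the concave extension, or more simply $\rho(r_1)/r_1-\rho'(r_1)=x^{-1}$-type expression: from \eqref{eq:radialderivatives}, $r\rho'/\rho=rx=1-K/t^2<1$. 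The same argument gives $\rho(r_1)<r_1$. So $\rho(r_1)$ lies strictly between the two extremes. Concretely, I take the one-parameter family $\psi_\lambda=\rho'(r_1)+(1-\rho'(r_1))\,\eta\bigl((\lambda-r)/\delta\bigr)$ on $[0,r_1]$, with $\eta$ the flat step function of Proposition~\ref{prop:path} and $\delta$ small. I then adjust this near $r_1$ so that it matches $\rho'$ smoothly and monotonically. Since $\rho'$ decreases, I interpolate on $[r_1-\delta,r_1]$ between the constant $\rho'(r_1)$ and $\rho'$; a cleaner option is to use $\max$-type smoothing of $\rho'$ and the step. The integral is continuous and monotone in $\lambda$, so the intermediate value theorem yields a $\lambda$ with $\int_0^{r_1}\psi_\lambda=\rho(r_1)$.

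Finally, smoothness at the center holds because $f(r)=r$ near $0$: the metric is Euclidean there in polar coordinates. Nonnegative sectional curvature follows from the warped product formulas. The radial curvatures equal $-f''/f\geq0$, and the tangential curvatures equal $(1-f'^2)/f^2\geq0$, using that $\gamma$ has curvature $1$.

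The main obstacle I expect is the careful smooth monotone matching of $\psi$ with $\rho'$ near $r_1$ while keeping the exact integral constraint. I would handle it by first fixing the transition shape near $r_1$ (smoothly joining $\min(\rho',\text{const})$ via $\eta$), and only then using the free parameter $\lambda$ for the earlier drop from $1$. The strict inequalities $r_1\rho'(r_1)<\rho(r_1)<r_1$ leave room for the small errors the smoothing introduces.
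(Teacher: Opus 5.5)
Your argument is correct in outline, and it is a close variant of the paper's construction. The paper works at the level of $\zeta=-f''\ge0$. It takes $\zeta=-\rho''$ times a flat cutoff on a collar at $L$ and adds two bumps at fixed positions. It then solves for their weights $c_1,c_2$ from the two moment conditions $\int\zeta=1-\rho'(L)$ and $\int r\zeta=\rho(L)-L\rho'(L)$, which encode $f'(L)=\rho'(L)$ and $f(L)=\rho(L)$.

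You instead prescribe $f'=\psi$ directly, so the derivative match is built in and only the value condition remains. In terms of $f''$, your family is a single bump of fixed mass $1-\rho'(r_1)$ whose position $\lambda$ slides, and the intermediate value theorem replaces the explicit $2\times2$ solve. Both routes rest on the same inequality $r\rho'(r)<\rho(r)<r$ near $L$, which is the paper's $0<P_c<LM_c$. Your justification via $r\rho'/\rho=rx=1-K/t^2<1$ and $\rho\le2\alpha r$ is the right one. Drop the appeal to a ``concave extension with $\rho(0^+)=0$'': producing such an extension is exactly what the lemma asks, so that argument is circular.

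The step you flag does need correcting as literally written. Interpolating on $[r_1-\delta,r_1]$ between the constant $\rho'(r_1)$ and $\rho'$ is not monotone, because $\rho'>\rho'(r_1)$ on that interval. So $(1-\eta)\rho'(r_1)+\eta\rho'$ rises above $\rho'(r_1)$ and then returns to it.

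The fix is to place the transition where $\rho'$ lies below the constant $c$, for example on $[r_1,r_1+\delta]$ with $c=\rho'(r_1)$. Then $\psi'=\delta^{-1}\eta'\,(\rho'-c)+\eta\rho''\le0$. You impose $\int_0^{r_1+\delta}\psi=\rho(r_1+\delta)$ instead, and the same strict inequalities still leave room for this when $\delta$ is small. Alternatively, a smooth max that is nondecreasing in each argument and equals the true max away from the diagonal also works. The paper's device avoids the issue altogether, because cutting off $-\rho''$ at the level of $f''$ keeps nonnegativity automatically.

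Finally, to control all sectional curvatures, not only those of radial and tangential planes, you need slightly more. Note that the curvature operator of $\mathrm dr^2+f^2\gamma$ is diagonal on $(\partial_r\wedge T\Sph^3)\oplus\Lambda^2T\Sph^3$, with eigenvalues $-f''/f$ and $(1-f'^2)/f^2$; this is what the paper checks.
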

\begin{proof}
Extend the formula for $\rho$ to a small left neighborhood of $L$. The strict inequality $\rho''(L)<0$ in \eqref{eq:warpingbounds} persists on a sufficiently small such neighborhood by continuity. Put
\[
 c=\frac{\rho(L)}L=\alpha e^{1/10},\qquad
 d=\rho'(L)=c(1-1/10000).
\]
Thus $0<d<c<1$. To construct $f$, it is enough to find a smooth nonnegative function $\zeta=-f''$, zero near zero and equal to $-\rho''$ near $L$, with moments
\begin{equation}\label{eq:moments}
 \int_0^L\zeta(r)\dd r=1-d=:M_c,\qquad
 \int_0^Lr\zeta(r)\dd r=\rho(L)-Ld=:P_c.
\end{equation}
The inequalities above imply $0<P_c<LM_c$, since $P_c/L=c-d<1-d$. Choose a smooth nonnegative collar function $\zeta_e$, equal to $-\rho''$ near $L$ and supported sufficiently close to $L$. After subtracting its moments from $(M_c,P_c)$, denote the residual mass and first moment by $M_{\mathrm{res}}$ and $P_{\mathrm{res}}$. We choose the collar so that $M_{\mathrm{res}}>0$ and its residual mean lies strictly between zero and the collar.

Indeed, if the collar is supported in $[L-\delta,L]$ and its two moments are $m_e,p_e$, then $m_e,p_e\to0$ as $\delta\to0$. Hence $(P_c-p_e)/(M_c-m_e)\to P_c/M_c<L$, whereas $L-\delta\to L$. For small enough $\delta$ the residual mean is positive and strictly less than $L-\delta$, leaving room for both interior bumps.
Such collar functions can be chosen explicitly: on $[L-\delta,L]$ use $\zeta_e(r)=-\rho''(r)\eta(2(r-L+\delta)/\delta)$ and extend by zero to the left. Flatness of $\eta$ makes the extension smooth, and it equals $-\rho''$ for $r\geq L-\delta/2$. On a fixed neighborhood of $L$, $|\rho''|$ is bounded; hence $m_e=O(\delta)$ and $0\leq p_e\leq Lm_e$, which justifies both limiting moments.

Choose two interior points $a_1<P_{\mathrm{res}}/M_{\mathrm{res}}<a_2$ below the collar, and smooth nonnegative symmetric bumps $\psi_1,\psi_2$ of unit integral, supported near $a_1,a_2$. Their first moments are $a_1,a_2$. Define the positive coefficients by
\[
 c_1=\frac{a_2M_{\mathrm{res}}-P_{\mathrm{res}}}{a_2-a_1},\qquad
 c_2=\frac{P_{\mathrm{res}}-a_1M_{\mathrm{res}}}{a_2-a_1}.
\]
Then $\zeta=\zeta_e+c_1\psi_1+c_2\psi_2$ has exactly the moments \eqref{eq:moments}. Define
\[
 f(r)=r-\int_0^r(r-\tau)\zeta(\tau)\dd\tau.
\]
The function $f$ equals $r$ near zero and satisfies $f'\in[d,1]$, so $f(r)\geq dr>0$ for $r>0$. Moreover, \eqref{eq:moments} gives $f(L)=\rho(L)$ and $f'(L)=\rho'(L)$. On the end collar, $f''=\rho''$; equality of value and derivative at $L$ therefore implies $f=\rho$ on the collar. The radial and tangential sectional curvatures of the cap are $-f''/f$ and $(1-(f')^2)/f^2$, respectively, both nonnegative. Near zero the metric is exactly Euclidean.
\par
These curvature formulas follow from the Gaussian connection in the proof of Lemma~\ref{lem:blocks}, now with $h_r=f^2\gamma$ and $P=(f'/f)\Id$. The radial contraction before taking a trace is $-(P'+P^2)=-(f''/f)\Id$. The Gauss equation subtracts $(f'/f)^2$ from the intrinsic sectional curvature $f^{-2}$, and the Codazzi terms vanish because $P$ is spatially constant and parallel. Thus, for an orthonormal frame $e_0=\partial_r,e_1,e_2,e_3$, with the last three vectors tangent to the slice,
\begin{align*}
 \langle R(e_0,e_i)e_j,e_0\rangle&=K_{\rm rad}\delta_{ij},\qquad
 \langle R(e_0,e_i)e_j,e_k\rangle=0,\\
 \langle R(e_i,e_j)e_k,e_l\rangle
 &=K_{\rm tan}(\delta_{jk}\delta_{il}-\delta_{ik}\delta_{jl}),\\
 K_{\rm rad}&=-f''/f,\qquad K_{\rm tan}=(1-(f')^2)/f^2.
\end{align*}

At $r>0$, the curvature operator acts by $K_{\rm rad}$ and $K_{\rm tan}$ on the two summands of $\Lambda^2T M=(\partial_r\wedge T\Sph^3)\oplus\Lambda^2T\Sph^3$. For orthonormal vectors $U=a\partial_r+X$ and $V=b\partial_r+Y$, with $X,Y$ tangent to the slice,
\[
 K(U,V)=K_{\rm rad}|aY-bX|^2+K_{\rm tan}|X\wedge Y|^2\geq0.
\]
\end{proof}

\begin{proof}[Proof of Theorem~\ref{thm:main}]
\emph{Completion and distance balls.} Use the cap of Lemma~\ref{lem:cap} for $r\leq L$ and the end \eqref{eq:tail} for $r\geq L$. Both metrics equal $\mathrm dr^2+\rho^2\gamma$ on a collar of $L$, so they define a smooth metric $G$ on $\R^4$, with center $p$. Lemma~\ref{lem:cap} and Proposition~\ref{prop:tailricci} give $\Ric_G\geq0$ everywhere, with strict positivity for $r\geq L$. The region $\{r\leq L\}$ is compact.
\par
Use the standard polar identification $(r,\omega)\mapsto r\omega$ with $\R^4\setminus\{0\}$, and set $p=0$. Each closed radial region $\{r\leq R\}$ is therefore a compact Euclidean ball. It carries the smooth positive definite metric just constructed, whose induced distance gives the same local topology: on any compact coordinate neighborhood, its smallest and largest eigenvalues relative to the Euclidean metric are bounded away from zero and infinity.
\par
For a piecewise smooth curve $\sigma$ on a compact parameter interval, $r\circ\sigma$ is absolutely continuous: away from $p$ the function $r$ is smooth, and near $p$ it is the Euclidean norm, hence locally Lipschitz. Along this curve, the inequality $|\dot r|\leq|\dot\sigma|_G$ holds almost everywhere; it follows from the Gaussian form away from $p$ and the Euclidean form near $p$; integration gives $|r(q)-r(q')|\leq d_G(q,q')$. A $d_G$-Cauchy sequence thus has bounded radii and lies in a compact radial region. It has a convergent subsequence in the common topology, and the Cauchy property then makes the whole sequence converge in $d_G$. This proves metric completeness. Since the metric is smooth on the connected finite-dimensional manifold $\R^4$, the Hopf--Rinow theorem~\cite[Theorem 6.19]{Lee2018} applies and yields geodesic completeness.

For a point with polar coordinates $(r,\omega)$, the radial curve from $p$ has length $r$, and the preceding lower bound shows that no shorter curve exists. Thus
\begin{equation}\label{eq:realballs}
 d_G(p,(r,\omega))=r,\qquad B_p(R)=\{r<R\}.
\end{equation}
The radial curves are geodesics because the metric has Gaussian form. On the Euclidean neighborhood of $p$, the variable $\omega\in\Sph^3$ is exactly the unit initial direction. The construction identifies $(0,\infty)\times\Sph^3$ diffeomorphically with $\R^4\setminus\{p\}$, and every slice metric is positive definite. Hence the geodesic with initial data $(p,\omega)$ is the radial curve for every nonnegative time; uniqueness of the geodesic equation gives $\exp_p(r\omega)=(r,\omega)$. This is a smooth bijection with the displayed smooth inverse, including at $r=0$ through the Euclidean cap. Thus $\exp_p:T_p\R^4\to\R^4$ is a diffeomorphism and $p$ is a pole.

\par\smallskip\noindent\emph{Divergence of the scalar integral.} For $R\geq2L$, define
\[
 m(R)=\inf_{r\in[R/2,R]}S(s(r)).
\]
Since $s(r)\to\infty$ and $S(s)\to\infty$, we have $m(R)\to\infty$: given $H>0$, all sufficiently large $r$ satisfy $S(s(r))\geq H$, and then the entire shell does so once $R/2$ is sufficiently large. No monotonicity of $S$ is needed. In Gaussian coordinates $\dd V_G=\dd r\,\dd V_{h_r}$. The boundary spheres of the shell have zero four-dimensional volume, so they do not affect the integral. Using $\Scal_G\geq0$ globally, the exact ball identity \eqref{eq:realballs}, $\rho\geq\alpha r$, and Proposition~\ref{prop:scalartail}, we obtain
\begin{align}
 R^{-2}\int_{B_p(R)}\Scal_G\dd V_G
 &\geq\frac{\alpha}{2R^2}\int_{R/2}^R rS(s(r))\dd r\notag\\
 &\geq\frac{3\alpha}{16}m(R)\longrightarrow+\infty.
 \label{eq:divergence}
\end{align}
Each integral is finite because the ball has compact closure and the metric is smooth.

\relax{}
\relax{}
\par\smallskip\noindent\emph{Volume growth and basepoints.} The slice-volume limit established in Subsection~\ref{sec:clock} and $s(r)\to\infty$ give $\Vol(\bar k_{s(r)})=2\pi^2\bar v(s(r))\to0$. Since $\rho(r)/r\to\alpha$, the slice volume $\mathcal V(r):=\Vol(\Sph^3,h_r)$ satisfies $\mathcal V(r)/r^3\to0$. Given $\epsilon>0$, choose $r_\epsilon\geq L$ with $\mathcal V(r)\leq\epsilon r^3$ for all $r\geq r_\epsilon$. Radial integration yields, for $R\geq r_\epsilon$,
\[
 \frac{\Vol_G B_p(R)}{R^4}
 \leq\frac{\Vol_G B_p(r_\epsilon)}{R^4}
       +\frac{\epsilon}{R^4}\int_{r_\epsilon}^R r^3\dd r
 \leq\frac{\Vol_G B_p(r_\epsilon)}{R^4}+\frac\epsilon4.
\]
Letting $R\to\infty$ and then $\epsilon\to0$ proves
\begin{equation}\label{eq:volumegrowth}
 \Vol_G B_p(R)=o(R^4),\qquad \operatorname{AVR}_p(G)=0.
\end{equation}
Finally fix any $q\in\R^4$ and put $d=d_G(p,q)$. For $R>d$, the triangle inequality gives $B_p(R-d)\subset B_q(R)\subset B_p(R+d)$. Nonnegative scalar curvature and \eqref{eq:divergence} imply
\begin{equation}\label{eq:basepoint}
 \mathcal Q_{G,q}(R)\geq
 \left(1-\frac dR\right)^2\mathcal Q_{G,p}(R-d)
 \longrightarrow+\infty.
\end{equation}
The other inclusion and \eqref{eq:volumegrowth} give $\Vol_G B_q(R)/R^4\leq(1+d/R)^4\Vol_G B_p(R+d)/(R+d)^4\to0$. Hence $\operatorname{AVR}_q(G)=0$ as well, completing the proof.
\end{proof}

\subsection{Euclidean products}

The extension to higher dimensions uses the elementary product argument also recorded by Hao--Zhu~\cite[Remark 1.2]{HaoZhu2026} for their three-dimensional example. Applied to Theorem~\ref{thm:main}, it retains a pole and divergence along all radii.

\begin{proposition}\label{prop:product}
For every integer $n\geq5$, there is a smooth complete metric $G_n$ on $\R^n$ with $\Ric_{G_n}\geq0$ and a pole, such that for every fixed $q\in\R^n$,
\[
 \lim_{R\to\infty}R^{2-n}\int_{B_q(R)}\Scal_{G_n}\dd V_{G_n}=+\infty.
\]
\end{proposition}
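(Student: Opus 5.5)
We take the Riemannian product $G_n=G\oplus g_{\mathrm{euc}}$ on $\R^n=\R^4\times\R^{n-4}$, where $G$ and $p$ are the metric and pole from Theorem~\ref{thm:main}. Smoothness is immediate. Completeness holds because a product of complete metrics is complete: the product distance satisfies $d^2=d_G^2+|\cdot|^2$, so Cauchy sequences project to Cauchy sequences in each factor. The Ricci tensor splits as $\Ric_G\oplus0\geq0$, and $\Scal_{G_n}(x,y)=\Scal_G(x)$. For the pole, the exponential map at $(p,0)$ is $\exp_p\times\mathrm{id}$ under $T_{(p,0)}\R^n=T_p\R^4\oplus\R^{n-4}$, since geodesics of a product are pairs of geodesics. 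It is therefore a diffeomorphism, and $(p,0)$ is a pole.

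For the integral, work first at the base point $\bar p=(p,0)$. Product balls satisfy $B_{\bar p}(R)\supset B_p(R/\sqrt2)\times B^{n-4}_0(R/\sqrt2)$, where the second factor is a Euclidean ball, because $d^2=d_G^2+|y|^2<R^2$ whenever both summands are below $R^2/2$. Since $\Scal_{G_n}\geq0$ and it depends only on the first factor, Fubini gives
\[
 \int_{B_{\bar p}(R)}\Scal_{G_n}\dd V_{G_n}\geq\omega_{n-4}\Bigl(\frac{R}{\sqrt2}\Bigr)^{n-4}\int_{B_p(R/\sqrt2)}\Scal_G\dd V_G .
\]
Multiplying by $R^{2-n}$, the right side becomes $c_n\,(R/\sqrt2)^{-2}\int_{B_p(R/\sqrt2)}\Scal_G\dd V_G$ with $c_n=\omega_{n-4}2^{(4-n)/2}\cdot 2^{-1}>0$. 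This tends to $+\infty$ by \eqref{eq:main}.

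For an arbitrary $q$, set $d=d_{G_n}(\bar p,q)$. Then $B_q(R)\supset B_{\bar p}(R-d)$, and nonnegativity of scalar curvature gives $\mathcal Q_{G_n,q}(R)\geq(1-d/R)^{n-2}\mathcal Q_{G_n,\bar p}(R-d)\to\infty$, exactly as in \eqref{eq:basepoint}. This proves the limit for every fixed $q$. The only delicate point is the choice of ball inclusion, which must produce a factor of $R^{n-4}$ from the Euclidean factor; the inclusion above does this cleanly. The rest is routine.
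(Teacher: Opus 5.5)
Your proof is correct and takes essentially the same route as the paper: the product $G\oplus g_{\R^{n-4}}$, the splitting of Ricci curvature, distance and the exponential map, and a product-ball inclusion combined with Fubini to extract the factor $R^{n-4}$. The only difference is cosmetic. The paper applies the inclusion $B_G(x,R/2)\times B_{\R^{n-4}}(u,R/2)\subset B_{G_n}(q,R)$ directly at an arbitrary $q=(x,u)$ and invokes the every-basepoint conclusion of Theorem~\ref{thm:main}. You instead work at $(p,0)$ with radius $R/\sqrt2$ and then move the basepoint by the triangle inequality.
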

\begin{proof}
Set $m=n-4\geq1$ and take $G_n=G+g_{\R^m}$, where $G$ is the metric in Theorem~\ref{thm:main} and $g_{\R^m}$ is Euclidean. The product connection splits into the two factor connections. Hence $\Ric_{G_n}=\Ric_G\oplus0$, $\Scal_{G_n}=\Scal_G$ pulled back from the first factor, and the natural volume measure is the product measure.
The product distance satisfies
\[
 d_{G_n}((x,u),(y,v))^2=d_G(x,y)^2+|u-v|^2.
\]
The lower bound follows by integrating the norm of the pair of factor velocities; constant-speed minimizing geodesics in the factors give the reverse bound. Thus a product Cauchy sequence converges by completeness of both factors. The geodesic equations also split, giving $\exp_{(p,0)}(v,w)=(\exp_p(v),w)$, a global diffeomorphism. Therefore $(p,0)$ is a pole.
For any fixed $q=(x,u)$, the distance identity gives $B_G(x,R/2)\times B_{\R^m}(u,R/2)\subset B_{G_n}(q,R)$. Using nonnegative scalar curvature and integrating first in the Euclidean factor, we obtain
\begin{equation}\label{eq:product}
 R^{2-n}\int_{B_{G_n}(q,R)}\Scal_{G_n}\dd V_{G_n}
 \geq\frac{\omega_m}{2^{m+2}}\mathcal Q_{G,x}(R/2)
 \longrightarrow+\infty.
\end{equation}
The last limit is Theorem~\ref{thm:main}.
\end{proof}
The Euclidean directions have zero Ricci curvature, so these product examples do not retain the strict Ricci positivity outside a compact set of the four-dimensional construction.

\section*{Acknowledgements}
The author thanks Professor Bobo Hua for helpful discussions and support.

We would like to clarify that our work was completed on September 5, 2026, independently of the work of Hao and Zhu \cite{HaoZhu2026}. Their paper was submitted to arXiv on September 6, 2026, and was first made publicly available online on September 9, 2026. Therefore, our results were obtained independently before their work became publicly available. We also emphasize that the two constructions are different.
\section*{AI usage}
The author used OpenAI's Codex in this work. This assistance included proposing and refining candidate metric constructions and proofs, performing computational checks, and reviewing arguments through separate AI agents. The author guided the research direction, checked the mathematical statements and proofs, and carried out the final revisions. The author takes full responsibility for the content of the paper.\par
\bibliographystyle{amsplain}

\bibliography{references-en}

\end{document}